\renewcommand{\leq}{\leqslant}
\renewcommand{\geq}{\geqslant}
\newtheorem{theorem}{Theorem}[section]
\newaliascnt{headcor}{headthm}
\newaliascnt{headconj}{headthm}
\newaliascnt{corollary}{theorem}
\newtheorem{corollary}[corollary]{Corollary}
\newaliascnt{claim}{theorem}
\newaliascnt{lemma}{theorem}
\newtheorem{lemma}[lemma]{Lemma}
\newaliascnt{conjecture}{theorem}
\newaliascnt{proposition}{theorem}
\newtheorem{proposition}[proposition]{Proposition}
\newtheorem{theoremx}{Theorem}
\theoremstyle{definition}
\newaliascnt{definition}{theorem}
\newtheorem{definition}[definition]{Definition}
\newaliascnt{notation}{theorem}
\newtheorem{notation}[notation]{Notation}
\newaliascnt{example}{theorem}
\newtheorem{example}[example]{Example}
\newaliascnt{examples}{theorem}
\newaliascnt{remark}{theorem}
\newtheorem{remark}[remark]{Remark}
\newaliascnt{fact}{theorem}
\newaliascnt{question}{theorem}
\newaliascnt{questions}{theorem}
\newaliascnt{problem}{theorem}
\newaliascnt{construction}{theorem}
\newaliascnt{setup}{theorem}
\newtheorem{setup}[setup]{Setup}
\newaliascnt{algorithm}{theorem}
\newaliascnt{observation}{theorem}
\newaliascnt{defprop}{theorem}
\def\equationautorefname~#1\null{(#1)\null}
\def\sectionautorefname~#1\null{Section #1\null}
\def\subsectionautorefname~#1\null{\S #1\null}
\newcommand{\R}[1]{R^{1/p^{#1}}}
\newcommand{\m}{\mathfrak{m}}
\newcommand{\NN}{\mathbb{N} }
\newcommand{\ZZ}{\mathbb{Z}}
\newcommand{\kk}{\mathbb{k}}
\newcommand{\IN}{\operatorname{in}}
\newcommand{\cR}{\mathcal{R}}
\newcommand{\reg}{\operatorname{reg}}
\newcommand{\Depth}{\operatorname{depth}}
\newcommand{\gr}{\operatorname{gr}}
\DeclareMathOperator{\chara}{{{char}}}
\newcommand{\rank}{\operatorname{rank}}	
\newcommand{\bh}{\operatorname{bigheight}}	
\newcommand{\depth}{\operatorname{depth}}	
\newcommand{\Char}{\operatorname{char}}
\newcommand{\height}{\operatorname{height}}	
\newcommand{\Ht}{\operatorname{ht}}
\newcommand{\deta}[1]{\operatorname{det}\left( {#1} \right)}
\def \R{\mathcal R}
\DeclareMathOperator{\lcm}{{lcm}}
\newcommand{\ls}{\leqslant}%
\newcommand{\gs}{\geqslant}
\newcommand{\MIN}{\operatorname{Min}}
\newcommand{\C}{\mathcal{C}}
\newcommand{\init}{\mathrm{in}}
\newcommand{\ttt}{{\bf t}}
\author[A. De Stefani]{Alessandro De Stefani}
\address{Dipartimento di Matematica, Universit{\`a} degli Studi di Genova, Via Dodecaneso 35, 16146 Genova, Italy}
\email{alessandro.destefani@unige.it}
\author[J. Montaño]{Jonathan Monta\~no}
\address{School of Mathematical and Statistical Sciences, Arizona State University, P.O. Box 871804, Tempe, AZ 85287-18041, United States of America}
\email{montano@asu.edu}
\author[L. N{\'u}{\~n}ez-Betancourt]{Luis N{\'u}{\~n}ez-Betancourt}
\address{Centro de Investigaci{\'o}n en Matem{\'a}ticas, Guanajuato, Gto., M{\'e}xico}
\email{luisnub@cimat.mx}
\author[L. Seccia]{Lisa Seccia}
\address{Max Planck Institute for Mathematics in the Sciences, Leipzig, Germany}
\email{seccia@mis.mpg.de}
\author[M. Varbaro]{Matteo Varbaro}
\address{Dipartimento di Matematica, Universit{\`a} degli Studi di Genova, Via Dodecaneso 35, 16146 Genova, Italy}
\email{varbaro@dima.unige.it}
\subjclass[2020]{}
\keywords{}
\begin{document}

%\title[On the $F$-purity of affine Schubert and Ladder Determinantal Varieties ]{$F$-singularities of affine  Schubert and Ladder Determinantal Varieties }

\title[Ladder determinantal varieties and their symbolic blowups]% and affine schubert varieties ]
{Ladder determinantal varieties and their symbolic blowups} %and affine schubert varieties }

\begin{abstract} 
	 In this article we show that  the symbolic Rees algebra of a mixed (two-sided) ladder determinantal ideal is strongly $F$-regular.
	 Furthermore, we prove that the  symbolic associated graded algebra of a  mixed ladder determinantal ideal is $F$-pure. %The latter implies that  mixed ladder determinantal rings are $F$-pure. 
Finally, we show that  ideals  of the poset of minors of a generic matrix give rise to $F$-pure algebras with straightening law.
	 %We also show that ideals defining alternating sign matrix varieties and ideals of poset of minors of a generic matrix give rise to $F$-pure algebras. %  with straightening law and recover  
\end{abstract}

\keywords{Ladder determinantal ideals,  symbolic powers, blowup algebras,  $F$-purity, $F$-regularity, Gr\"obner degenerations, matrix Schubert varieties, Knutson ideals.}
\subjclass[2020]{Primary: 14M12, 13C40, 13A35, 13A30. Secondary:  14M15.}

\maketitle
%\setcounter{secnumdepth}{1}
%\setcounter{tocdepth}{1}
%\tableofcontents

%%%%%%%%%%%%%%%%%%%%%%%%%%%%%%%%%%%%%%%%%%%%%%%%%%%%%%%%%
\section{Introduction}\label{Intro}
%%%%%%%%%%%%%%%%%%%%%%%%%%%%%%%%%%%%%%%%%%%%%%%%%%%%%%%%%
Let $X=(x_{i,j})$ be a $k\times \ell$ generic matrix. 
%Thinking at $X$ as the set $\{x_{ij}:(i,j)\in\{1,\ldots ,k\}\times \{1,\ldots ,\ell\}\}$,  
A subset $L\subseteq X$ is a  {\it ladder} if whenever  $x_{i,j},x_{i',j'} \in L$ with $i \leq i'$ and $j \geq j'$,  we have $x_{u,v} \in L$ for every $i \leq  u \leq  i'$ and $j' \leq  v \leq  j$ (see \autoref{ladder}).   The {\it unmixed ladder determinantal ideal} $I_t(L)$ associated to $L$ and $t\in \NN$ is the ideal of $\kk[L]$ generated by the $t$-minors of $L$. 
In this article we focus on the more general class of {\it mixed ladder determinantal ideals} $I_\ttt(L)$   which allows the sizes of the minors $\ttt=(t_1,\ldots,t_v) \in \ZZ_{>0}^v$ to vary   %under  certain constraints  %  according to a suitable recipe. 
%If the sizes are given by $\ttt=(t_1,\ldots , t_v)\in \ZZ_{>0}^v$, we denote the ideal by  $I_{\ttt}(L)\subseteq \kk[L]$    and when $t_1=t_2=\ldots =t_v=t$ we simply write  $I_{\ttt}(L)=I_t(L)$; see 
%These ideals, whose precise definition we give in
 (see \autoref{sub_ladder}), and on {\it ladder determinantal varieties} which are   the  varieties they determine.

 Unmixed ladder determinantal rings $\kk[L]/I_t(L)$ 
 were first introduced %in 1988 
 by Abhyankar  in the study of singularities of Schubert varieties % of flag manifolds 
 \cite{abhyankar1988enumerative}.   Narasimhan \cite{narasimhan1986irreducibility} showed that these rings are integral domains by explicitly  describing Gr\"obner bases for the ideals $I_t(L)$. Later, Herzog and Trung \cite{herzog1992grobner}, and Conca \cite{conca1995ladder} further exploited Gr\"obner techniques to show that these rings are Cohen-Macaulay and normal, respectively.   Conca and Herzog \cite{concaherzog1997} recovered these results by showing more generally that unmixed ladder determinantal rings  have rational singularities if $\kk$ has characteristic zero, or equivalently, that they are $F$-rational if $\kk$ has characteristic $p\gg 0$.  In the same paper, Conca and Herzog point out that it was unknown at the time whether they are $F$-pure in characteristic $p>0$.

A ladder $L$ is  {\it one-sided} if the three `corners' $x_{1,1}, x_{1,\ell}, x_{k,\ell}$ belong to $L$. Gonciulea and Miller  \cite{gonciulea2000mixed} introduced  the class of mixed one-sided ladder determinantal rings  and showed that they also have rational singularities and therefore they are Cohen-Macaulay and normal. Glassbrener and Smith proved that complete intersection one-sided unmixed ladder determinantal rings are  strongly $F$-regular \cite{GS}, and Conca and Herzog later showed that this holds without  the complete intersection assumption  \cite{concaherzog1997} (see also \cite{mehta1985frobenius, hsiao2013f}). On the other hand, one-sided mixed ladder determinantal varieties are  affine neighborhoods of Schubert varieties \cite{fulton1992flags}, thus the fact that they satisfy the above properties can also be deduced from the corresponding results for Schubert varieties \cite{ramanathan1985schubert} (see also \cite[Theorem 2.4.3]{knutson2005grobner}).  In fact, Schubert varieties are globally $F$-regular \cite{lauritzen2006global}.
Two-sided mixed ladder determinantal rings constitute the most general setup; they were defined by Gorla \cite{GorlaMixed}, who showed that they are integral domains and Cohen-Macaulay. In this article we focus on symbolic blowup algebras associated to  % most general setup: 
two-sided mixed ladder determinantal ideals. %, as defined by Gorla  \cite{GorlaMixed}. 
%Regarding their $F$-singularities,  Robichaux  \cite{Robichaux} mentioned that Escobar,  Fink,  Rajchgot, and  Woo proved that all two-sided mixed ladder determinantal varieties are 321-avoiding Kazhdan-Lusztig varieties.  In that case, they would be strongly F-regular because Kazhdan-Lusztig varieties are open subsets of Schubert varieties and, as such, they are strongly F-regular \cite{lauritzen2006global}. To the best of our knowledge the work of Escobar,  Fink,  Rajchgot, and  Woo is still in preparation and not publicly available. As we could not find any argument or reference to support this statement, we decided not to consider this as a known fact until their work is finalized and made available. 
As a consequence of our results on symbolic blowups, we obtain that two-sided mixed ladder determinantal ideals define $F$-pure rings (see \autoref{blowups_f_pure}), hence %settling a case which, taking into account our previous remarks, was still open, and 
answering the question of Conca and Herzog \cite[p. 122]{concaherzog1997}.
 %We note that there is no relation between the latter ideals and  Schubert varieties. In particular, global properties of Schubert varieties do not directly pass to them. 

Rees algebras associated to a filtration are very important objects in commutative algebra and algebraic geometry. Symbolic Rees algebras, i.e., Rees algebras associated to the filtration of symbolic powers of an ideal, are particularly interesting; for instance, see \cite{GrifoSeceleanu} for a survey on this topic. The fact that they might not be Noetherian rings is related to counterexamples to Hilbert's 14$^{{\rm th}}$ problem, and makes them typically harder to handle than ordinary Rees algebras (i.e., Rees algebras associated to ordinary powers). We also point out that the Noetherianity of symbolic Rees algebras of pure height one ideals has close connections with the minimal model program \cite{MMP1,MMP2,MMPp}, and with the conjectural equivalence between weak and strong $F$-regularity; see \cite{AberbachHunekePolstra} for recent progress in this direction. The Noetherianity of symbolic Rees algebras associated to ladder determinantal ideals was known in the  unmixed case   by  work of Bruns and Conca  \cite[Theorem 4.1]{brunsconca98}, who proved  that they admit a finite SAGBI basis with respect to an  antidiagonal term order. Using a combinatorial operation on ladders, which we call  {\it chamfering}, we show that they are Noetherian also in the  mixed case  (see \autoref{ThmNoetherian}). We point out that our approach does not guarantee that the symbolic Rees algebra associated to a ladder determinantal ideal $I=I_\ttt(L)$ admits a finite SAGBI basis with respect to some monomial order $\prec$. %In Section \ref{MLDSection} we also focus on Rees algebras of filtrations associated to initial ideals of symbolic powers with respect to any antidiagonal order $\prec$ on the ladder. As a consequence of the techniques we develop we can show that, if $I=I_\ttt(L)$ and $\R^s(I)$ admits a finite SAGBI basis with respect to $\prec$, 
However, when it does, as a byproduct of our methods we also conclude that $\IN_{\prec}\left(I^{(n)}\right)=\IN_{\prec}(I)^{(n)}$ for all $n\in\NN$ and $p = \Char(\kk) \gg 0$ (see \autoref{cor_SAGBI}).

Our first main result concerns $F$-singularities of symbolic Rees algebras of ladder determinantal ideals: we show that they are all strongly $F$-regular.

\begin{theoremx}[\autoref{stronglyF}] Let $\kk$ be a perfect field of characteristic $p>0$ and $L$ be a ladder. For $\ttt=(t_1,\ldots,t_v) \in \ZZ_{>0}^v$ let $I_\ttt(L) \subseteq \kk[L]$ be the corresponding mixed ladder determinantal ideal. The symbolic Rees algebra $\R^s\left(I_{\ttt}(L)\right) = \bigoplus_{n \geq 0} I_\ttt(L)^{(n)}$ is strongly $F$-regular.
\end{theoremx}

Next, we turn our attention to symbolic associated graded rings $\gr^s(I_\ttt(L)) = \bigoplus_{n \geq 0} I_\ttt(L)^{(n)}/I_\ttt(L)^{(n+1)}$. Such rings are typically worse behaved than symbolic Rees algebras; however, their singularities have closer connections to those of the quotient $\kk[L]/I_\ttt(L)$, since the latter is a direct summand of $\gr^s(I_\ttt(L))$. The aforementioned result about strong $F$-regularity of two-sided mixed ladder determinantal varieties points to the direction that $\gr^s(I_\ttt(L))$ might always be strongly $F$-regular. While we could not establish whether this is true or not, we prove that $\gr^s(I_\ttt(L))$ is always at least $F$-pure.

\begin{theoremx}[\autoref{ThmMixLadderSFP}]
Let $\kk$ be a perfect field of characteristic $p>0$ and $L$ be a ladder. For $\ttt=(t_1,\ldots,t_v) \in \ZZ_{>0}^v$ let $I_\ttt(L) \subseteq \kk[L]$ be the corresponding mixed ladder determinantal ideal. The symbolic associated graded ring $\gr^s\left(I_{\ttt}(L)\right)$ is $F$-pure.
\end{theoremx}

At the end of Section \ref{MLDSection} we show that two-sided unmixed ladder determinantal ideals are {\it Knutson}  associated to the product $f$ of the minors corresponding to  the main antidiagonals of the ladder. The notion of Knutson ideals  was  introduced   over finite fields  \cite{knutson2009frobenius} and later developed over any field (also of characteristic 0) by Seccia \cite{seccia2021knutson}. More generally we prove that the ideal generated by fixed-size minors in the subladders corresponding to consecutive rows or columns in the matrix are Knutson ideals associated to the  polynomial $f$ described above. In particular, the natural minimal generators of any sum of these ideals form a Gr\"obner basis with respect to an antidiagonal term order. If, besides the operation of taking sums, we also allow the one of taking intersections, all the resulting ideals still admit a squarefree initial ideal with respect to an antidiagonal term order and, in positive characteristic, define $F$-pure rings (\autoref{ThmLadderKI}).

When $L=X$ the above result had already been proved by Seccia  \cite{seccia2022knutson}. In the case $L=X$ we also prove that any ideal of fixed-size minors of a top-left or bottom-right corner submatrix of $X$ is Knutson associated to $f$ (\autoref{p:msk}). As a consequence any sum of these ideals is  Knutson associated to $f$ and  their natural minimal generators  form a Gr\"obner basis with respect  to antidiagonal term orders. If, besides the operation of taking sums, we also allow the one of taking intersections, all the resulting ideals still admit a squarefree initial ideal with respect to an antidiagonal term order and, in positive characteristic, define $F$-pure rings. Some classes of ideals that can be obtained this way and for which we obtain the latter consequences  are Schubert determinantal ideals and, more generally, alternating sign matrix ideals as they are sums of ideals of minors of top-left corner submatrices of $X$ (see	\autoref{c:msk}). We remark that the fact that Schubert determinantal varieties % matrix Schubert varieties 
%were already known to be  %The results described in the first of the previous points were already known: that matrix Schubert varieties are 
%globally 
are $F$-pure in positive characteristic has been known since the work of Brion and Kumar  \cite{Brion_Kumar_Book}, and they are in fact strongly $F$-regular as already pointed out before. The fact that the natural minimal generators of Schubert determinantal ideals form a Gr\"obner basis with respect  to antidiagonal term orders was also known  by work of Knutson and Miller  \cite{knutson2005grobner}. Moreover,  a proof by induction on the Bruhat order, and so different from the one given here, of the fact that  Schubert determinantal ideals are Knutson associated to $f$ was  already sketched  by Knutson \cite{knutson2009frobenius}. Klein and  Weigandt \cite{klein2021bumpless} showed that alternating sign matrix ideals have radical initial ideals with respect to antidiagonal term orders and hence they are radical themselves. 
All of the above facts are recovered, and sometimes strengthened, by \autoref{c:msk}. %is a Knutson ideal for $f$ and its natural minimal generators form a Gr\"obner basis w.r.t. an antidiagonal term order. In particular, in positive characteristic matrix Schubert varieties are $F$-split 

We conclude by highlighting a connection of our work with  Eisenbud's suspicion that graded Algebras with Straightening Law (ASL for short) might always be $F$-pure in positive characteristic \cite[p. 245]{eisenbud80}. As noticed by Koley and Varbaro \cite[Remark 5.2]{KoleyVarbaro} this is not always true, although  all graded ASL are $F$-injective. We confirm Eisenbud's intuition for ASL arising from ideals of the poset of minors of a generic matrix: since they are intersection of sums of ideals of minors of top-left corner submatrices of $X$, our results yield that they are Knutson. 
\begin{theoremx}[\autoref{c:pif}] \label{THMC}
Let $\kk$ be a field, $X$ be a generic matrix and $\Pi$ be the poset of all minors of $X$. There exists a polynomial $f \in \kk[X]$ such that, for any  ideal $\Omega\subseteq \Pi$, the poset ideal $\Omega\kk[X]$  is Knutson with respect to $f$. In particular, $\init_\prec(\Omega \kk[X])$ is a squarefree monomial ideal for any antidiagonal term order $\prec$, and $\kk[X]/\Omega \kk[X]$ is $F$-pure  in positive characteristic.
\end{theoremx}

An analogous result actually holds for what we call generalized poset ideals, a class which strictly includes poset ideals; see \autoref{c:gpif}.

%% talk about symbolic powers 

%%%%%%%%%%%%%%%%%%%%%%%%%%%%%%%%%%%%%%%%%%%%%%%%%%%%%%%%%
\section{Background}\label{SectionBackground}
%%%%%%%%%%%%%%%%%%%%%%%%%%%%%%%%%%%%%%%%%%%%%%%%%%%%%%%%%

In this section we include some preliminary material that is needed in the subsequent sections. In particular, in   \autoref{sub_ladder} and \autoref{sub_affine_schub} we recall the definitions of the classes of determinantal ideals that are treated in this paper. 

%%%%%%%%%%%%%%%%%%%%%%%%%%%%%%%%%%%%%%%%%%%%%%%%%%%%%%%%%
\subsection{Methods in prime characteristic}\label{SubSecPrimeChar} %%%%%%%%%%%%%%%%%%%%%%%%%%%%%%%%%%%%%%%%%%%%%%%%%%%%%%%%%

We begin by recalling some concepts from positive characteristic commutative algebra. For more information we refer the reader to Ma and Polstra's book \cite{ma2021f}.

\begin{definition}\label{DefFbasic}
Let $R$ be a ring of prime characteristic $p > 0$. For every $e\in \ZZ_{> 0}$, we denote by $F^e_*( R)=\{F^e_* (r)\;|\; r\in R\}$ the $R$-algebra which is isomorphic to $R$ as a ring and whose action by $R$ is given by $f F^e_* (r)=F^e_* \left(f^{p^e}r\right)=F^e_*\left(f^{p^e}\right) F^e_* (r)$. % If $e=1$. we simply write $F^e_*$  as $F_*$.  
We say that $R$ is {\it $F$-finite} if the the Frobenius map $F^e:R\to F_*^e(R)$, $r\mapsto F_*^e(r^{p^e})$ is finite for some, or equivalently all, $e\in \ZZ_{> 0}$, that is, if  $F_*^e(R)$ is a finitely generated  $R$-module. 
%\item 
%
%\item 

We say that $R$ is  {\it $F$-pure} if for some, or equivalently all, $e\in \ZZ_{>0}$ the Frobenius map $F^e$ is pure, i.e.,  the map $M\otimes_R R\xrightarrow{\mathbf{1}_M\otimes_R F^e} M\otimes_R F_*^e(R)$ is injective for every $R$-module $M$. 
%\item 
We say that $R$ is {\it $F$-split} if for some, or equivalently all, $e\in \ZZ_{>0}$ the map $ F^e$ splits. 
Clearly, if $R$ is $F$-split, it is $F$-pure; the converse holds if $R$ is $F$-finite. We also note that if $R$ is  $F$-pure, then it is reduced, and in this case one can identify the map $R \to F^e_*(R)$ with the natural inclusion $R \hookrightarrow R^{1/p^e}$, where $R^{1/p^e}$ is the ring of $p^e$-th roots of $R$. In particular, $R$ is $F$-pure (resp. $F$-split) if and only if the map $R\to R^{1/p^e}$ is pure (resp. splits) for some, or equivalently all, $e\in \ZZ_{>0}$. 
%\end{itemize}
\end{definition}

We now  recall the following notion that connects symbolic powers and the property of being $F$-split \cite{DSMNB}. 

\begin{definition}\label{DefFpureFilt}
 Let $R=\kk [x_1,\ldots,x_d]$ be a standard graded polynomial ring over an $F$-finite  field $\kk$ of prime characteristic. We say that a homogeneous ideal $I\subseteq R$ is  {\it symbolic $F$-split} if  there exists a splitting $\phi:R^{1/p}\to R$ such that
$\phi\left(\left(I^{(np+1)}\right)^{1/p}\right)\subseteq I^{(n+1)}$ for every $n\in \NN$.   
Equivalently, if
$$\bigcap^{\infty}_{n=0} \left[ \left( I^{(n+1)}\right)^{[p]}:I^{(np+1)}\right]\not\subseteq \m^{[p]}.$$
\end{definition}

\begin{remark}\label{rem_impt_prop_symb}
	In the present paper, ideals which are symbolic $F$-split are especially relevant because of the following two properties:
\begin{enumerate}
\item If  $I\subseteq R$ is  symbolic $F$-split, 
then the {\it symbolic Rees algebra} $\R^s(I):=\oplus_{n\in \NN} I^{(n)}$ and the {\it symbolic associated graded algebra}   $\gr^s(I):=\oplus_{n\in \NN} I^{(n)}/I^{(n+1)}$ are $F$-split. 
In particular, if $I$ is symbolic $F$-split then  $R/I$ is also $F$-split \cite[Theorem 4.7]{DSMNB}.
\item Let  $I\subseteq R$ be a homogeneous ideal and $H:=\bh(I)$ the maximum height of a minimal prime ideal of $I$. If there exists a term order $\prec$ on $R$ for which $\init_\prec\left(I^{(H)}\right)$ contains a squarefree monomial, then $I$ is symbolic $F$-split  \cite[Lemma 6.2(1)]{DSMNB}.  Moreover, in this case $\init_\prec(I)$ is automatically radical \cite[Theorem 3.13]{KoleyVarbaro},  and if $I$ is equidimensional then both $\bigoplus_{n\in \NN}\init_\prec(I^{(n)})$ and $\bigoplus_{n\in \NN}\frac{\init_\prec(I^{(n)})}{\init_\prec\left(I^{(n+1)}\right)}$ are $F$-split \cite[Proposition 7.5]{DSMNB}.% We note that the assumption in  \autoref{DefFpureFilt}(2) automatically implies that . %; in fact, this holds  even  in characteristic zero. 
\end{enumerate}
\end{remark}

%There is a stronger relationship between symbolic powers and initial ideals in this context. To be able to discuss it,  let us first recall that,
Let $I\subseteq R$ be a homogeneous ideal such that  $\init_\prec(I)$ is radical. In this case, we have that 
$\init_\prec\left (I^{(n)}\right)\subseteq \init_\prec(I)^{(n)}$  for every  $n\in\NN$   \cite[Proposition 5.1]{Sull}, and hence \[\init_\prec(I)^n\subseteq \init_\prec\left (I^{(n)}\right)\subseteq \init_\prec(I)^{(n)} \,\,\,\text{ for every }n\in\NN.\]
Since $\init_\prec(I)$ is radical,  $\init_\prec\left (I^{(n)}\right)=\init_\prec(I)^{(n)}$ if and only if $\init_\prec\left (I^{(n)}\right)$ has no embedded components. This fact allows us to show the following interesting relationship between symbolic powers and initial ideals.
%
%\begin{proposition}\label{symbolic-initial}
%Assume $\kk$ is a perfect field and let $I\subseteq R=\kk[x_1,\ldots ,x_d]$ be a homogeneous equidimensional radical ideal of height $h$. Fix a monomial order $\prec$ on $R$ and  assume that $\bigoplus_{n\in \NN}\init_\prec(I^{(n)})$ is Noetherian. If $\chara(\kk)\gg 0$, the following are equivalent
%\begin{enumerate}[\rm (1)]
%\item $\init_\prec\left(I^{(n)}\right)=\init_\prec(I)^{(n)}$ for every $n\in\NN$ and $\init_\prec(I)$ is radical.
%\item $\init_\prec\left(I^{(h)}\right)=\init_\prec(I)^{(h)}$ and $\init_\prec(I)$ is radical.
%\item $\init_\prec\left(I^{(h)}\right)$ contains a squarefree monomial.
%\end{enumerate}
%\end{proposition}
%\begin{proof}
%The implication $(1)\Rightarrow (2)$ is trivial. For $(2)\Rightarrow (3)$ we note that  $x_1\cdots x_d$ belongs to $P^h$ for every minimal prime ideal $P$ of $\init_\prec(I)$ and then $x_1\cdots x_d\in \init_\prec(I)^{(h)}= \init_\prec\left(I^{(h)}\right)$. 
%It remains to show  $(3)\Rightarrow (1)$.  From \autoref{rem_impt_prop_symb}(2) it follows that  $\IN_\prec(I)$ is radical and $\bigoplus_{n\in\NN}\frac{\init_\prec(I^{(n)})}{\init_\prec(I^{(n+1)})}$ is reduced. Thus, $\init_\prec\left(I^{(n)}\right)=\init_\prec(I)^{(n)}$ for every $n\in\NN$  \cite[Corollary 7.10]{DSMNB} \jonathan{Don't we need $I$ prime here?}.
%\end{proof}

\begin{proposition}\label{symbolic-initial}
	 Let $R=\kk [x_1,\ldots,x_d]$ be a standard graded polynomial ring over a perfect field $\kk$ of prime characteristic. 
%	Assume $\kk$ is a perfect field and let 
Let $P\subseteq R$ be a homogeneous prime ideal of height $h$. Fix a term order $\prec$ on $R$ and  assume that $\bigoplus_{n\in \NN}\init_\prec\left(P^{(n)}\right)$ is Noetherian. If $\chara(\kk)\gg 0$,
then the following are equivalent
	\begin{enumerate}[\rm (1)]
		\item $\init_\prec\left(P^{(n)}\right)=\init_\prec(P)^{(n)}$ for every $n\in\NN$ and $\init_\prec(P)$ is radical.
		\item $\init_\prec\left(P^{(h)}\right)=\init_\prec(P)^{(h)}$ and $\init_\prec(P)$ is radical.
		\item $\init_\prec\left(P^{(h)}\right)$ contains a squarefree monomial.
	\end{enumerate}
\end{proposition}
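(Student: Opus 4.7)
The implication $(1)\Rightarrow(2)$ is trivial.

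For $(2)\Rightarrow(3)$: since $\init_\prec(P)$ is radical, decompose it as $\init_\prec(P)=Q_1\cap\cdots\cap Q_r$ into monomial minimal primes of heights $h_i$. Heights are preserved under Gr\"obner degeneration, so each $h_i\geq h$; thus $\init_\prec(P)^{(h)}=\bigcap_i Q_i^{h}$. Let $V$ be the set of variables appearing in $\init_\prec(P)$, and set $\mu:=\prod_{x\in V}x$. For every $i$ the $Q_i$-degree of $\mu$ equals $h_i$, so $\mu\in Q_i^{h_i}\subseteq Q_i^{h}$ and therefore
\[
\mu\;\in\;\init_\prec(P)^{(h)}\;=\;\init_\prec\bigl(P^{(h)}\bigr),
\]
giving the desired squarefree monomial.

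For $(3)\Rightarrow(1)$, I would first invoke \autoref{rem_impt_prop_symb}(2), observing that $\bh(P)=h$ since $P$ is prime: the presence of a squarefree monomial in $\init_\prec(P^{(h)})$ yields at once that $\init_\prec(P)$ is radical, that $P$ is symbolic $F$-split, and that the graded algebra
\[
A\;:=\;\bigoplus_{n\geq 0}\init_\prec\bigl(P^{(n)}\bigr)T^n
\]
is $F$-split. The comparison algebra $B:=\bigoplus_{n\geq 0}\init_\prec(P)^{(n)}T^n$ is Noetherian, as symbolic Rees algebras of squarefree monomial ideals always are. Both $A$ and $B$ are $\NN$-graded $R$-subalgebras of $R[T]$, hence Noetherian domains; furthermore the inclusion $A\subseteq B$ becomes an isomorphism after inverting any nonzero monomial in $A_1=\init_\prec(P)\,T$, so $A$ and $B$ share the same fraction field.

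The task then is to upgrade this birational equivalence to an actual equality $A=B$, equivalently (by the paragraph just before the statement) to show that $\init_\prec(P^{(n)})$ has no embedded primes for every $n$. The plan is to compare bigraded Hilbert series: since $\init_\prec$ preserves Hilbert functions of ideals, equality of the Hilbert series of $A$ and $B$ reduces to the claim that $R/P^{(n)}$ and $R/\init_\prec(P)^{(n)}$ have the same Hilbert function for every $n$. This is where the hypothesis $\chara(\kk)\gg 0$ should enter: the symbolic $F$-splittings of $P$ (given) and of $\init_\prec(P)$ (automatic from its squarefree monomial structure) can, for sufficiently large $p$, be iterated and made compatible in such a way that the correct Hilbert function is propagated from $B$ to $A$; the Noetherianity of $A$ bounds the degrees that must be controlled by a single such iteration. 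Pinning down this compatibility—showing precisely how the single squarefree monomial in $\init_\prec(P^{(h)})$ forces the equality in every degree via iterated Frobenius—is the main obstacle of the argument.
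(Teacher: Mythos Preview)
Your treatment of $(1)\Rightarrow(2)$ and $(2)\Rightarrow(3)$ matches the paper's (the paper uses the full product $x_1\cdots x_d$ rather than only the variables occurring in $\init_\prec(P)$, but the idea is identical).

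For $(3)\Rightarrow(1)$, however, you have a genuine gap. You correctly invoke \autoref{rem_impt_prop_symb}(2) to obtain radicality of $\init_\prec(P)$ and $F$-splitness of $A=\bigoplus_n\init_\prec(P^{(n)})T^n$, but then you embark on a Hilbert-series comparison between $A$ and $B=\bigoplus_n\init_\prec(P)^{(n)}T^n$ which you yourself describe as ``the main obstacle'' and do not finish. The sketch you offer---propagating Hilbert functions via iterated Frobenius and some unspecified compatibility of splittings---is not an argument; nothing in what you have written forces the Hilbert functions of $R/P^{(n)}$ and $R/\init_\prec(P)^{(n)}$ to agree. (Incidentally, the claim that inverting a monomial of $A_1$ makes $A\subseteq B$ an isomorphism is also not justified: it would require $\init_\prec(P)^{(n)}\cdot\init_\prec(P)^m\subseteq\init_\prec(P^{(n+m)})$ for suitable $m$, which you have not established.)

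The paper's route is shorter and avoids the Hilbert-series detour entirely. From the same \autoref{rem_impt_prop_symb}(2) one extracts not only that $A$ is $F$-split but that the \emph{associated graded}
\[
\bigoplus_{n\geq 0}\frac{\init_\prec\bigl(P^{(n)}\bigr)}{\init_\prec\bigl(P^{(n+1)}\bigr)}
\]
is $F$-split, hence reduced. This reducedness, together with the Noetherianity hypothesis and $\chara(\kk)\gg 0$, is exactly the input to \cite[Corollary~7.10]{DSMNB}, which then yields $\init_\prec(P^{(n)})=\init_\prec(P)^{(n)}$ for all $n$ in one stroke. The key piece you are missing is that the reducedness of this associated graded (equivalently, the absence of embedded primes in each $\init_\prec(P^{(n)})$, cf.\ the paragraph preceding the proposition) is already packaged as a citable result, and it is in that result that both the Noetherianity and the large-characteristic assumptions are consumed.
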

\begin{proof}
	The implication $(1)\Rightarrow (2)$ is trivial. For $(2)\Rightarrow (3)$ we note that  $x_1\cdots x_d$ belongs to $Q^h$  for every minimal prime ideal $Q$ of $\init_\prec(P)$, so $x_1\cdots x_d\in \init_\prec(P)^{(h)}= \init_\prec\left(P^{(h)}\right)$. 
	It remains to show  $(3)\Rightarrow (1)$.  From \autoref{rem_impt_prop_symb}(2) it follows that  $\IN_\prec(P)$ is radical and $\bigoplus_{n\in\NN}\frac{\init_\prec(P^{(n)})}{\init_\prec(P^{(n+1)})}$ is reduced. Thus, $\init_\prec\left(P^{(n)}\right)=\init_\prec(P)^{(n)}$ for every $n\in\NN$  \cite[Corollary 7.10]{DSMNB}. 
\end{proof}

\begin{remark} \label{remark_char}
The lower bound on the characteristic of the field $\kk $ in \autoref{symbolic-initial} can be made precise: if $\bigoplus_{n\in \NN}\init_\prec(P^{(n)})$ is generated as an $R$-algebra by forms of  degrees $b_1,\ldots ,b_m$, then \autoref{symbolic-initial} applies if  $\chara(\kk)>\lcm(b_1,\ldots ,b_m)$.
\end{remark}

%%%%%%%%%%%%%%%%%%%%%%%%%%%%%%%%%%%%%%%%%%%%%%%%%%%%%%%%%
\subsection{Knutson ideals}\label{SubSecKnutson}
%%%%%%%%%%%%%%%%%%%%%%%%%%%%%%%%%%%%%%%%%%%%%%%%%%%%%%%%%

The notion of {\it Knutson ideals} was coined by  Conca and Varbaro \cite{conca2020square}  after Knutson's work on compatibly split ideals and degenerations \cite{knutson2009frobenius} (see also \cite{brion2007frobenius}).

\begin{definition}\label{K.I.} Let $f \in R= \kk [x_1,\ldots,x_d]$ be a homogeneous polynomial where $\kk$ is a field and assume that  for some term order $\prec$ the leading term of $f$ is a squarefree monomial. 
	We define $\mathcal{C}_f$ to be the smallest set of ideals satisfying the following conditions:
	\begin{enumerate}[(1)]
		\item $(f) \in \mathcal{C}_f$.
		\item  If $I \in \mathcal{C}_f$ then $(I:J) \in \mathcal{C}_f$ for every ideal $J \subseteq R$.
		\item If $I$ and $J$ are in $\mathcal{C}_f$ then $I+J$ and $I \cap J$ must  also be in $\mathcal{C}_f$.
	\end{enumerate} 
	If $I$ is an ideal in $\mathcal{C}_f$, we say that $I$ is a \emph{Knutson ideal associated to} $f$, or simply that it is {\it Knutson}. % . More generally, we say that $I$ is a \emph{Knutson ideal} if $I \in \mathcal{C}_f $ for some $f$.
\end{definition} 
If $\kk$ has positive characteristic and $\deg(f)=d$, then $f$ defines a splitting map on $R$  (see \cite[Lemma 4]{knutson2009frobenius} for finite fields and  \cite{seccia2021knutson} in general). %Therefore,
Knutson ideals are %turn out to be 
compatibly split with respect to this map. Therefore,  Knutson ideals  define $F$-split %$F$-pure (equivalently, $F$-split)
rings in positive characteristic. In addition, Knutson ideals
have squarefree initial ideals  in arbitrary characteristic (see \cite[Theorem 2]{knutson2009frobenius} for finite fields and \cite{seccia2021knutson} in general) and consequently their extremal Betti numbers %, Castelnuovo-Mumford regularity, and depth of Knutson ideals 
coincide
with  those of their initial ideals \cite{conca2020square}. Furthermore, Gr\"obner bases of Knutson ideals are well-behaved %\lq\lq well\rq \rq  
with respect to sums as the %in the sense  that the 
union of Gr\"obner bases of Knutson ideals is a Gr\"obner basis of their sum. This quality makes computations of Gr\"obner bases for some Knutson ideals somewhat  easier than for general ideals. % and then it makes  them useful for solving problems in applied algebra. %(see for example \cite{}\jonathan{Add reference}). %\par 
All of  these considerations make  Knutson ideals relevant  objects %f particular interest 
in computational algebra, combinatorial commutative algebra, and $F$-singularity theory.  
%We refer the reader to Seccia's  recent papers  for examples and further  details about the properties of Knutson ideals in any characteristic \cite{seccia2021knutson, seccia2022knutson}. 

\begin{remark}
	Since every ideal of $\mathcal{C}_f$ is radical,  Condition  (2)  in  \autoref{K.I.} can be replaced by:
	\begin{itemize}
		\item[$(2^\prime)$] If $I \in \mathcal{C}_f$ then $P \in \mathcal{C}_f$ for every $P \in \MIN(I)$.
	\end{itemize}
\end{remark}
%%More details about the properties of Knutson ideals in any characteristic can be found in \cite{seccia2021knutson}. These properties were first proved by Knutson \cite{knutson2009frobenius} in the case $\kk=\mathbb{Z}/p\mathbb{Z}$. 

%%%%%%%%%%%%%%%%%%%%%%%%%%%%%%%%%%%%%%%%%%%%%%%%%%%%%%%%%
\subsection{Ladder determinantal ideals}\label{sub_ladder}
%%%%%%%%%%%%%%%%%%%%%%%%%%%%%%%%%%%%%%%%%%%%%%%%%%%%%%%%%

In this subsection we recall  the definition of two-sided mixed ladder determinantal ideals. For this, we follow closely the notations and conventions form Gorla's work \cite{GorlaMixed} where these ideals were introduced    (see also \cite{gonciulea2000mixed}).

\begin{definition}\label{def_ladder}
	Let $X=(x_{i,j})$ be a generic matrix of size $k \times \ell$,  i.e.,
	\[ X=
	\begin{bmatrix}
		x_{1,1}       & \dots  & x_{1,\ell} \\
		%	x_{2,1}        & \dots  & x_{2,\ell} \\
		\vdots  & \vdots &\vdots \\
		%   x_{k-1,1}       & x_{k-1,2}  & \dots & x_{k-1,l}\\
		x_{k,1}        & \dots & x_{k,\ell}
	\end{bmatrix}.
	\]% We index the entries of $X$ following the linear algebra convention, i.e., the entry $(i,j)$ is the one in row $i$ and column $j$.  
	%	Let $X=(x_{ij})$ be a $k \times \ell$ matrix of indeterminates and $L\subseteq  X$. 
	We say that a subset $L\subseteq X$ is a {\it {\rm (}two-sided\,{\rm )} ladder} if it has
	the following property: if $x_{i,j},x_{i',j'} \in L$ with $i \leq i'$ and $j \geq j'$,  then $x_{u,v} \in L$ for any $i \leq  u \leq  i'$ and $j' \leq  v \leq  j$.  
	%
	%Let $L$ be a ladder. 
	We denote by 
	$$\left\{x_{1,a_1}=x_{b_1,a_1},\ldots,x_{b_u,a_u}=x_{b_u,\ell}\right\}\subseteq L \text{ with } 1=b_1<\cdots < b_u\text{  and  } a_1<\cdots < a_u=\ell$$ 
	%$$\left\{(1,a_1)=(b_1,a_1),\ldots,(b_u,a_u)=(b_u, \ell)\right\} \text{ with } 1=b_1<\cdots < b_u$$ 
	the {\it upper-outside corners}, defined by the property that $x_{b_k-1,a_k}\notin L$ and $x_{b_k,a_k+1}\notin L$, and by 
	$$\left\{x_{d_1,1}=x_{d_1,c_1},\ldots,x_{d_v,c_v}=x_{k,c_v}\right\}\subseteq L \text{ with } d_1\ls \cdots \ls d_v=k
	\text{  and } 1=c_1\ls \cdots \ls c_v$$ the {\it lower-outside corners} of $L$.  These are not uniquely determined by $L$, indeed they are part of the definition, even if a subset of the lower-outside corners is determined by $L$: such a subset consists of those $x_{i,j}\in L$ such that $x_{i+1,j}\notin L$ and $x_{i,j-1}\notin L$; consequently, for all $h=1,\ldots v,$ we have that either $x_{d_h+1,c_h}\notin L$ or $x_{d_h,c_h-1}\notin L$. 
	Moreover, we assume  all of the corners are distinct. %and that and no two consecutive corners coincide and that 
	%$b_i < b_j$ and $c_i < c_j$ if $i < j$.
	If $L$ has only one upper-outside corner we say it is {\it one-sided}. 
	We also define the following subladders for all $j=1,\ldots ,v$
\begin{equation}\label{eq_Lj}
	L_j = \{x_{b,a} \in L \mid 1 \leq b \leq d_j, c_j \leq a \leq l\}.
\end{equation}
%	Note that each $L_j$  has a %is a one-sided ladder, i.e. with only one lower- or upper-outside corner. We note that the  
	%with 
%	unique lower-outside corner  at %of $L_j$ is 
%	$x_{d_j,c_j}$, such a ladder is said to be  {\it one-sided}. %and $L=\bigcup_{j=1}^v L_j$. 
	In \autoref{example_ladder_1} we illustrate all of these notations in a specific example. 
\end{definition}

\begin{example}\label{example_ladder_1}
	Let $X$ be a generic matrix of size $10\times 10$. In \autoref{ladder} the shaded area represents the ladder $L$ whose upper-outside corners are $\{x_{1,2},\,x_{4,8},\, x_{8,10}\}$ and lower-outside corners are $\{x_{3,1},\,x_{6,1},\, x_{8,6},\, x_{10,8}\}$; note that the first two lower-outside corners belong to the same column. 
	\begin{figure}[ht]
		\includegraphics[scale=0.4]{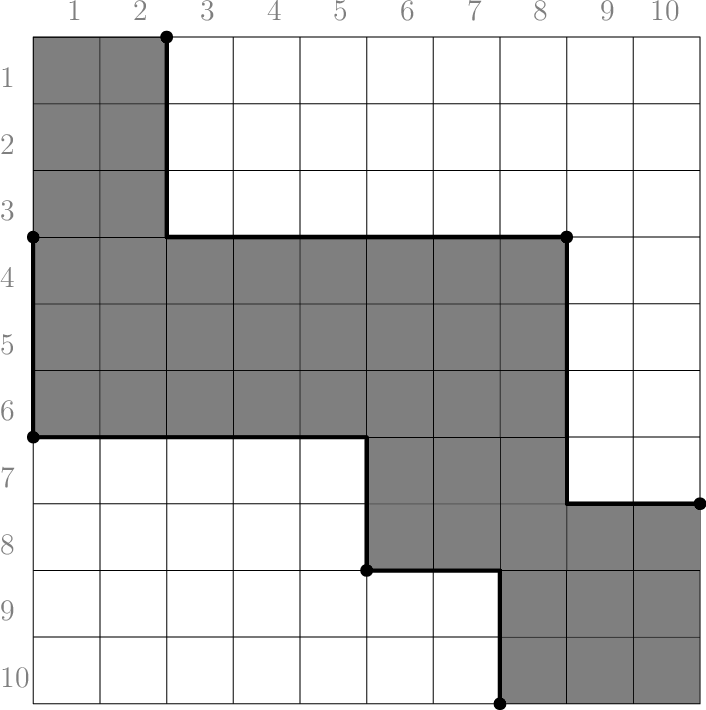}
		\caption{Ladder  $L$}
		\label{ladder}
	\end{figure}
	
	\noindent Since $L$ has four lower-outside corners, it has four subladders $L_1,L_2, L_3, L_4$ as defined above. 
	In \autoref{subladders} we highlight these subladders. % in this order from left to right. 
	\begin{figure}[ht]
		\centering
		\mbox{\subfigure{\includegraphics[width=1.5in]{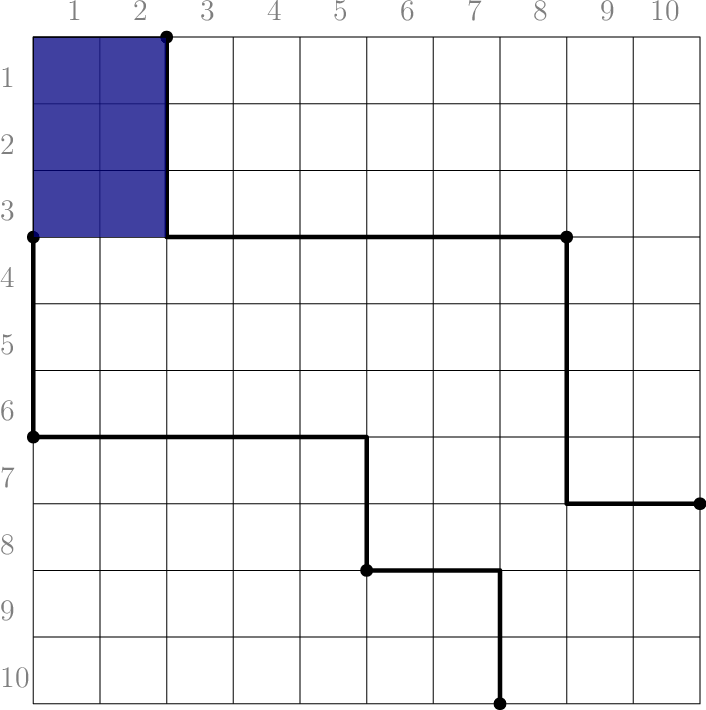}}\quad
			\subfigure{\includegraphics[width=1.5in]{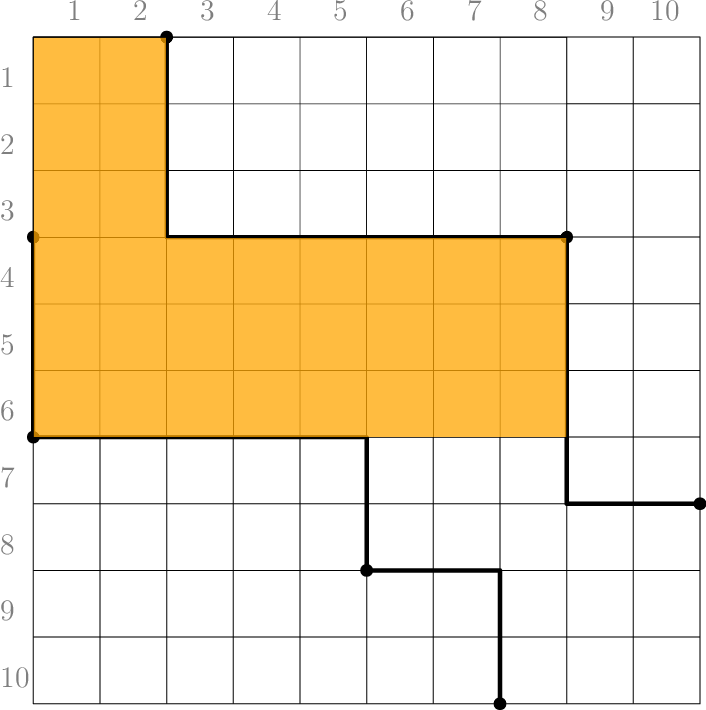} } \quad
			\subfigure{\includegraphics[width=1.5in]{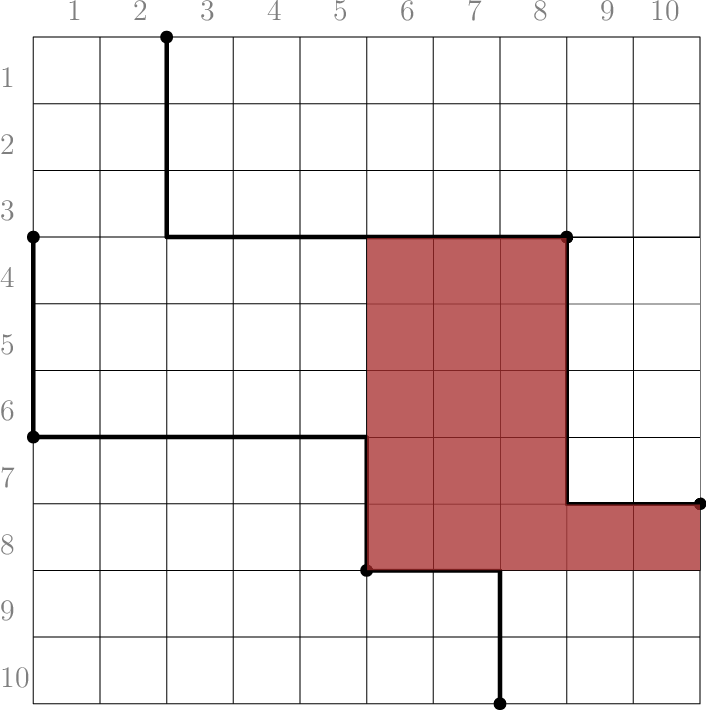} }\quad
			\subfigure{\includegraphics[width=1.5in]{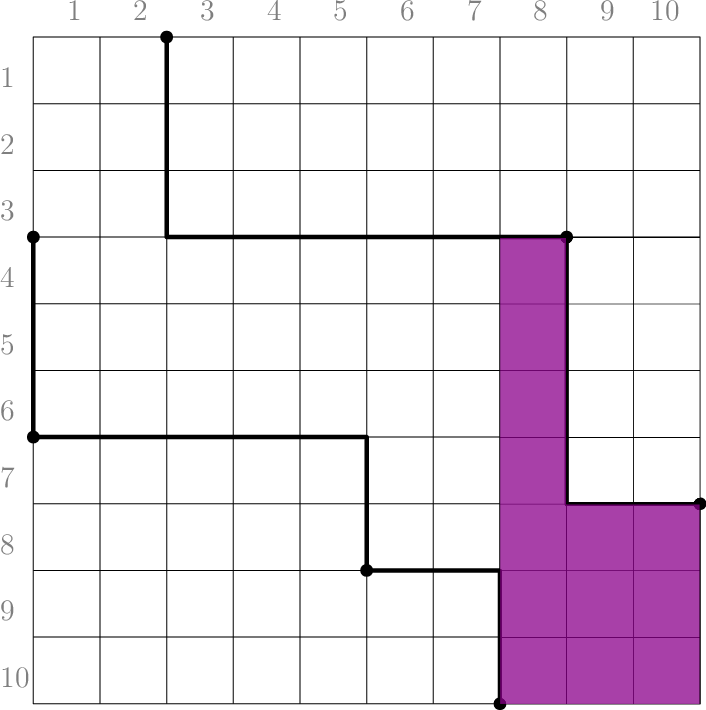} }
		}
		\caption{The subladders $L_1$, $L_2$, $L_3$, $L_4$ ordered from left to right} \label{subladders}
	\end{figure}
\end{example}

\begin{definition}\label{def_mixed_lad_det}  Let $\kk$ be a field and $L \subseteq X$  a ladder as in \autoref{def_ladder}. If  $t$ is a positive integer, we denote by $I_t (L)$ the ideal of $\kk[L]$ generated by the  $t$-minors of $X$ that are completely contained in $L$. The ideal $I_t (L)$ is a {\it ladder determinantal ideal} of $L$. On the other hand, if $\ttt=(t_1,\ldots,t_v) \in \ZZ_{>0}^v$ we denote by $I_\ttt(L)$ the ideal $I_{\ttt}(L) = I_{t_1}(L_1) + \ldots + I_{t_v}(L_v)$. Such ideal is a {\it mixed ladder determinantal ideal} of $L$. Notice that,  if $t=t_1=\cdots=t_v$, then  $I_{\ttt}(L)=I_t (L)$. The ideals $I_{\ttt}(L)$ and $I_{t}(L)$ are prime (see \cite[]{GorlaMixed}).
	
	Throughout this article, for every mixed ladder determinantal ideal  we assume that  all the entries of $L$ are involved in some generator of  $I_{\ttt}(L) $ and that  $I_{t_i}(L_i) \not\subseteq I_{t_j}(L_j)$ for all $i \ne j$.
\end{definition}

\begin{example}
	Let $L$ be as in \autoref{example_ladder_1}. If $\ttt=(2,3,1,2)$, then $I_\ttt(L)=I_2(L_1)+
	I_3(L_2)+
	I_1(L_3)+
	I_2(L_4).$  If $\ttt=(2,2,2,2)$, then $I_\ttt(L)=I_2(L)$.
\end{example}

\begin{notation} \label{running assumptions ladder} We summarize here the assumptions on the ladders treated in the present paper, which are the same as in  Gorla's work \cite{GorlaMixed}.
	\begin{enumerate}[(1)]
		\item All the entries of $L$ are involved in some minor, otherwise we delete the superfluous entries and consider the remaining  ladder. 
		\item No two consecutive corners coincide, and no two upper-outside corners belong to the same row or to the same column.
		\item  $I_{t_i}(L_i) \not\subseteq I_{t_j}(L_j)$ for all $i \ne j$.
		\item When we view $L$ as a ladder inside the $k \times \ell$ matrix $X$ we assume that there is no proper submatrix $X'$ of $X$ which contains $L$.
	\end{enumerate}
\end{notation}

%For $1 \leq i \leq j \leq v$, we let 
%\[
%L_{i,j} = (L_i \smallsetminus L_{i-1}) \cap (L_j \smallsetminus L_{j+1}),
%\]
%where we set $L_{-1} = L_{v+1} = \emptyset$. Moreover, set
%\[
%t_{i,j} = \begin{cases} \min\{t_u \mid L_{i,j} \subseteq L_u\} & \text{ if } L_{i,j} \ne \emptyset \\  1 & \text{ otherwise }
	%\end{cases}
	%\]

%%%%%%%%%%%%%%%%%%%%%%%%%%%%%%%%%%%%%%%%%%%%%%%%%%%%%%%%%
\subsection{Schubert determinantal ideals and  poset ideals}\label{sub_affine_schub}
%%%%%%%%%%%%%%%%%%%%%%%%%%%%%%%%%%%%%%%%%%%%%%%%%%%%%%%%%

In this  subsection we introduce further families of determinantal ideals treated in this paper. %corresponding to affine Schubert varieties.  
We begin by introducing some notation. 

\begin{notation}\label{notation_affine_subsec}
%	Let $X=(x_{i,j})$ be a generic matrix of size $k \times \ell$, %and entries $x_{i,}$, 
%	i.e.,  
%	\[ X=
%	\begin{bmatrix}
%		x_{1,1}       & \dots  & x_{1,\ell} \\
	%	x_{2,1}        & \dots  & x_{2,\ell} \\
%		\vdots  & \vdots &\vdots \\
		%   x_{k-1,1}       & x_{k-1,2}  & \dots & x_{k-1,l}\\
%		x_{k,1}        & \dots & x_{k,\ell}
%	\end{bmatrix}.
%	\]
	Let $X$ be as in \autoref{def_ladder}.	For any $1 \leq a <b \leq \ell$ and $1 \leq c<d \leq k$, we denote by $X_{[a,b]}^{[c,d]}$ the submatrix of $X$ with column indices $a,\ldots, b$ and row indices $c,\ldots, d$. 
	In the case $[c,d]=[1,k]$ (resp. $[a,b]=[1,\ell]$ ) we simply write $X_{[a,b]}$ (resp. $X^{[c,d]}$).  
	We use $X_{a \times b}$ to denote the northwest submatrix $X_{[1,b]}^{[1,a]}$ and $X^{a \times b}$ to denote the southeast submatrix $X_{[\ell-b+1 ,\ell]}^{[k-a+1,k]}$. 
	 Moreover, for a set of indices $1\ls i_1<\cdots<i_r\ls k$ and $1\ls j_1<\cdots<j_r\ls \ell$ we denote by $[i_1\cdots i_r\mid j_1\cdots j_r]$ the determinant ($r$-minor) of the submatrix of $X$ corresponding to rows $i_1,\ldots, i_r$ and columns  $j_1,\ldots, j_r$.
\end{notation}

Following \cite{MillerSturmfels}, by the {\it antidiagonal term} of a minor $[i_1\cdots i_r\mid j_1\cdots j_r]$ we mean the product of the variables in the antidiagonal of the corresponding submatrix, namely $x_{i_1,j_r}x_{i_2,j_{r-1}}\cdots x_{i_r,j_1}$. Let $\kk$ be an arbitrary  field.  A term order on the polynomial ring $\kk[X]$ is {\it antidiagonal} if the initial term of any minor is  its antidiagonal term. A typical example of an antidiagonal term order is the lexicographical one with variables ordered as $$x_{1,\ell}>\cdots>x_{1,1}>x_{2,\ell}>\cdots>x_{2,1}>\cdots >x_{k,\ell}>\cdots>x_{k,1}.$$

Although it is not our intention to review the theory of {\it algebras with straightening law} in this paper (see \cite[Chapter 3]{bruns2022determinants}),  we wish to introduce a few concepts for the sake of clarity. The set $\Pi$ of all the minors of the generic matrix $X$ can be partially ordered by the relation
\[[a_1\ldots a_r|b_1\ldots b_r]\leq [c_1\ldots c_s|d_1\ldots d_s] \iff r\geq s, \ \ a_i\leq c_i \mbox{ and } b_i\leq d_i \ \text{ for all } \ i=1,\ldots ,s.\] 
 The starting observation is that $\kk[X]$  is generated %by $\Pi$ 
as a % $\kk$-algebra. In fact, a basis of $\kk[X]$ as 
$\kk$-vector space by the set %is given 
by
\[\{\pi_1\cdots \pi_m\mid  \ \pi_i\in\Pi \text{ for }i=1,\ldots, m \text{ and } \ \pi_1\leq \cdots \leq \pi_m\}.\]
The elements of this $\kk$-basis are called {\it standard monomials}. Although it may happen that the product of two standard monomials is not a standard monomial, such a product
can be uniquely expressed as a $\kk$-linear combination of standard monomials;   %, which is in some sense compatible with the poset structure on $\Pi$.
 this process  is known in the literature as {\it straightening law}. 
 
With the above notation, we are ready to define matrix Schubert varieties and its defining ideals (see \cite[Chapter 15]{MillerSturmfels} for more information).

\begin{definition}\label{def_MSV}
%\begin{enumerate}[(1)]
Let $\mathscr{M}_{k\times \ell}(\mathbb{\kk})$ be the vector space of $k\times \ell$ matrices with entries in $\kk$ and let $w=(w_{i,j})\in \mathscr{M}_{k\times \ell}(\mathbb{\kk})$ be a {\it partial permutation}, i.e., $w_{i,j}=0$ for all $i,j$ except for at most one entry equal to 1 in each row and  column.
\begin{enumerate}[(1)]
\item The {\it matrix Schubert variety} $\overline{X_w}$ associated to $w$ is given by 
$$\overline{X_w}:=\left\{Y\in \mathscr{M}_{k\times \ell}\mid \rank(Y_{r\times s})\ls \rank(w_{r\times s}), \text{  for all }1\ls r\ls k, 1\ls s\ls \ell
\right\}.
$$
\item The defining ideal of $\overline{X_w}$ is the {\it Schubert determinantal ideal} $I_w$ which is generated by the $\rank(w_{r\times s})+1$ minors of $X_{r\times s}$ for every $1\ls r\ls k, 1\ls s\ls \ell$.
\end{enumerate}	

%\end{enumerate}
\end{definition}

\begin{remark}
	\
	\begin{enumerate}[(1)]
		\item Given $1\ls t\ls \min\{k,\ell\}$, if $w\in \mathscr{M}_{k\times \ell}(\mathbb{\kk})$ is the partial permutation defined by  $w_{ii}=1$ for $1\ls i\ls t-1$, then $I_w$ is the classical determinantal ideal of $t$-minors of $X$. 
			\item 
		Mixed ladder determinantal ideals of  ladders with only one upper-outside corner coincide with the Schubert determinantal ideals of {\it vexillary permutations} \cite[Proposition 9.6]{fulton1992flags}.
		\item For any partial permutation  there are smaller sets of generators for $I_w$ (see \cite[Theorem 15.15]{MillerSturmfels}, \cite{gao2022minimal}).

	%	\item If $k=\ell$, $\kk=\mathbb{C}$, and $w$ is a (full rank) permutation matrix, the varieties $\overline{X_w}$ are the ones mentioned in \autoref{Intro}. % of this subsection.  
	   
    \end{enumerate}
\end{remark}
%
% \textcolor{blue}{
%MSV:
%\begin{enumerate}
%\item Discuss determinantal ideals of MSV,  partial permutation, Rothe Diagram, Example 1  - PITTURE 1, and Example 2- PICTURE 2.
%
%\end{enumerate} 
%}

%\begin{example}
%	content...
%\end{example}

We continue with the definition of the following class of ideals.
\begin{definition}
	An {\it  ideal} of $\Pi$ is a subset $\Omega\subseteq \Pi$ such that $$\text{ for all } \omega\in \Omega \text{ and } \pi\in\Pi, \text{  if } \pi\leq \omega \text{ then } \pi\in\Omega.$$  
For any  ideal $\Omega$ of $\Pi$, we call the ideal generated by $\Omega$ in $\kk[X]$  a {\it poset ideal}.
\end{definition}

%
%\textcolor{blue}{
%\begin{enumerate}
%\item  EXAMPLE.
%\item 
%\end{enumerate}
%}

\begin{remark}

For fixed $\delta\in\Pi$,
a particular type of ideal is the set
 $$\Omega_\delta:=\{\pi\in\Pi: \pi\not\gs \delta\}.$$
The poset ideals $\Omega_\delta\kk[X]$ are %precisely 
the defining ideals of %the  
varieties %mentioned in  \autoref{Intro} 
originating as certain affine charts of Schubert varieties  \cite[p. 535]{procesi2007lie} (see also \cite[Chapter 5, \S A]{BookDet}).
\end{remark}

%%%%%%%%%%%%%%%%%%%%%%%%%%%%%%%%%%%%%%%%%%%%%%%%%%%%%%%%%%%
\section{Chamfering ladders}\label{Decorn_Section}
%%%%%%%%%%%%%%%%%%%%%%%%%%%%%%%%%%%%%%%%%%%%%%%%%%%%%%%%%%%
	
In this section we define a combinatorial operation on ladders, which we call  {\it chamfering}, and show some important consequences of it. This definition is inspired by the work of previous authors \cite{conca1995ladder, concaherzog1997, GorlaMixed, gonciulea2000mixed}.

\begin{setup} \label{setup_chamfer} 
	We adopt the assumptions and notations from \autoref{def_ladder}, \autoref{def_mixed_lad_det}, and \autoref{running assumptions ladder}. % follow the definitions and notations from \autoref{sub_ladder}.
	In particular, $X=(x_{i,j})$ is a generic matrix of size $k \times \ell$ and $L\subseteq	 X$ is a ladder. Let $\kk$ be a field. We denote by $R$ the polynomial ring $R=\kk[L]$. Given   $\ttt=(t_1,\ldots,t_v) \in \ZZ_{>0}^v$ we denote by $I_\ttt(L)$ the corresponding mixed ladder determinantal ideal.    
\end{setup}

\begin{definition}\label{def_chamfer}
Assume \autoref{setup_chamfer}. We say a ladder $L$ is {\it non-degenerate} with respect to $\ttt$ if $I_\ttt(L)$ satisfies the assumptions in \autoref{running assumptions ladder}.  % Let $x_{d_j, c_j}$ be such that $d_{j-1}<d_j$ and $c_j>c_{j+1}$. 
We call the quantity $$\sum_{i<j}|t_i-t_j|$$
the {\it total width} of $L$ with respect to $\ttt$.   
Let  $\left\{x_{d_1,1}=x_{d_1,c_1},\ldots,x_{d_v,c_v}=x_{k,c_v}\right\}$ be the lower-outside corners of $L$. We say that a pair $(L', \ttt')$ with $L'\subset X$ a ladder and $\ttt'\in \NN^d$  is the {\it chamfer} of $(L,\ttt)$ at $x_{d_j,c_j}$ (or that $(L', \ttt')$ is obtained from $(L,\ttt)$ by {\it chamfering} $x_{d_j,c_j}$) if $\ttt'=(t_1,\ldots, t_{j-1}, t_j-1,t_{j+1},\ldots, t_v)$, and $L'$ has the same upper-outside corners of $L$ and lower-outside corners $$\left\{x_{d_1,1}=x_{d_1,c_1},\ldots,x_{d_{j-1},c_{j-1}},\, x_{d_j-1,c_j+1},\, x_{d_{j+1},c_{j+1}},\ldots,x_{d_v,c_v}=x_{k,c_v}\right\}$$ for some  %a lower-outside corner 
$x_{d_j,c_j}$ such that $d_j<d_{j-1}$ and $c_j>c_{j-1}$. 
\end{definition}

%The following proposition with its corollaries are  the main results of this section.

\begin{proposition} \label{prop_chamfer}
Assume \autoref{setup_chamfer}, and let $L$ be a non-degenerate ladder with respect to $\ttt$. Then
\begin{enumerate}[{\rm (1)}]
	\item if $L'$ is the chamfer of $(L,\ttt)$ at $x_{d_j,c_j}$, then  $L'$  is non-degenerate.
	\item $L$ is the chamfer of a non-degenerate ladder with strictly smaller total width. 
\end{enumerate}
\end{proposition}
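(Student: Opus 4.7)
For part (1), the plan is a direct verification of each of the four non-degeneracy conditions in \autoref{running assumptions ladder} for the pair $(L',\ttt')$. Since the chamfer operation is local — modifying only the single lower-outside corner $x_{d_j,c_j}$ (replaced by $x_{d_j-1,c_j+1}$) and decrementing $t_j$ by one — conditions (1), (2) and (4) reduce to direct geometric inspections: the strict inequalities built into the chamferability hypothesis ensure that the new corner sits strictly between its neighbors and that no row or column of $X$ meeting $L$ is emptied. The heart of the verification lies in condition (3), the non-containments $I_{t'_i}(L'_i)\not\subseteq I_{t'_m}(L'_m)$ for $i\ne m$. Since only the $j$-th datum changes, it suffices to analyze pairs involving the index $j$; the key observation is that $L'_j$ is obtained from $L_j$ by stripping one row and one column near $(d_j,c_j)$, while $t'_j=t_j-1$. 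Using this, one exhibits, for each $m\ne j$, minors of the appropriate size in $L'_m$ avoiding the removed row and column that are not absorbed into $I_{t_j-1}(L'_j)$; the non-containments then transfer from those of $(L,\ttt)$.

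For part (2), the plan is to construct $(L'',\ttt'')$ as a reverse chamfer of $(L,\ttt)$ at a suitable admissible index $j$, that is, to replace the $j$-th corner $x_{d_j,c_j}$ by $x_{d_j+1,c_j-1}$ and increase $t_j$ to $t_j+1$. A direct computation yields
\[
W(\ttt'')-W(\ttt) \;=\; \#\{i\ne j:t_i\le t_j\}\;-\;\#\{i\ne j:t_i>t_j\},
\]
so the task reduces to exhibiting an admissible $j$ at which strictly more of the other $t_i$'s exceed $t_j$ than fail to. Existence of such an index follows by combining the combinatorial restriction that reverse chamferability demands $c_j>c_{j-1}$ and $d_j<d_{j+1}$ with the non-degeneracy hypothesis, which forces the $t_i$'s attached to comparable subladders to be distributed in a way compatible with the non-containments $I_{t_i}(L_i)\not\subseteq I_{t_m}(L_m)$. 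Once such a $j$ is located, non-degeneracy of $(L'',\ttt'')$ is checked by arguments parallel to those in part (1), and by construction $(L,\ttt)$ is the chamfer of $(L'',\ttt'')$ at the corresponding corner.

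The main obstacle I anticipate is in part (2), namely coordinating the geometric admissibility of a reverse chamfer at a particular corner with the arithmetic requirement that this choice strictly reduces the total width. Both constraints can fail in isolation — a corner with the right staircase geometry may have $t_j$ too large, and a $t_j$ with the right numerical profile may sit at an aligned corner that cannot be reverse-chamfered — and it is precisely the non-degeneracy hypothesis, by ruling out such mismatched configurations, that makes their simultaneous satisfaction possible.
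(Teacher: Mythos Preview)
For part (1) your plan matches the paper's: both verify the conditions of \autoref{running assumptions ladder} directly. The paper streamlines the check of condition~(3) by observing that it is equivalent, for adjacent indices, to the numerical inequalities $d_{j+1}-d_j>t_{j+1}-t_j$ and $c_{j+1}-c_j>t_j-t_{j+1}$, which are immediate to verify after the chamfer; your plan to exhibit explicit minors witnessing the non-containments would also work but is heavier than needed.

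For part (2) the paper is considerably more direct than your outline: it simply picks an index $j$ with $t_j=\min\{t_1,\dots,t_v\}$, reverse-chamfers there, and asserts that this produces a non-degenerate $(L'',\ttt'')$ of strictly smaller total width. Your width-change formula is correct, and the obstacle you anticipate is genuine: when the minimum of the $t_i$ is attained at several indices (for instance $\ttt=(1,1,2)$, which has total width $2$), \emph{no} single reverse chamfer strictly decreases $\sum_{i<m}|t_i-t_m|$ --- every choice of $j$ yields total width at least $2$. So neither the paper's one-line argument nor your vague existence claim closes this point for the statement as literally written. The intended application (\autoref{coroll_chamfer}) is unaffected, since iterating reverse chamfers at indices of minimal $t_j$ does reach the unmixed case in finitely many steps; if you want a clean inductive invariant that makes part~(2) go through verbatim, replace the total width by $\sum_i\bigl(\max_m t_m - t_i\bigr)$, which strictly decreases under any reverse chamfer at an index $j$ with $t_j<\max_m t_m$ and vanishes exactly in the unmixed case.
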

\begin{proof} We note that condition (3) of \autoref{running assumptions ladder} holds if and only if $d_{j+1}-d_j>t_{j+1}-t_j$ and $c_{j+1}-c_j>t_{j}-t_{j+1}$, and one can readily check that it holds for $L'$. The fact that $L'$ satisfies the other conditions of \autoref{running assumptions ladder} is an easy check. For the second claim, let $j$ be such that $t_j=\min\{t_1,\ldots, t_v\}$. Consider the ladder $L''$ with same upper-outside corners as $L$ and lower outside corners $$\left\{x_{d_1,1}=x_{d_1,c_1},\ldots,x_{d_{j-1},c_{j-1}},\, x_{d_j+1,c_j-1},\, x_{d_{j+1},c_{j+1}},\ldots,x_{d_v,c_v}=x_{k,c_v}\right\}.$$
We note that $L''$ is non-degenerate with respect to $\ttt''=(t_1,\ldots, t_{j-1}, t_j+1,t_{j+1},\ldots, t_v)$, $(L,\ttt)$ is the chamfer of $(L'',\ttt'')$ at $x_{d_j+1,c_j-1}$, and the total width of $L''$ is strictly smaller than $L$.
\end{proof}

\begin{corollary} \label{coroll_chamfer}
Assume \autoref{setup_chamfer}. If $L$ is  non-degenerate with respect to $\ttt$, then there exists a ladder $L''$ and $t,u\in \NN$ such that $L''$ is non-degenerate with respect to $(t,\ldots, t)\in \NN^u$ and $L$ is obtained from $L''$ by a finite sequence of chamferings. 
\end{corollary}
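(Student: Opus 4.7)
The plan is to proceed by strong induction on the total width of $(L, \ttt)$, which is a nonnegative integer by definition. The base case is total width equal to $0$: then $|t_i - t_j| = 0$ for every pair of indices $i < j$, which forces all entries of $\ttt$ to be equal to a common value $t$. In this case, there is nothing to do; we simply take $L'' := L$, $u := v$, and the (empty) sequence of chamferings.

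For the inductive step, assume the statement holds for every non-degenerate ladder with total width strictly less than the total width of $(L, \ttt)$, which we assume to be positive. The key tool is \autoref{prop_chamfer}(2), which furnishes a non-degenerate ladder $L_1$ (with respect to some tuple $\ttt_1$) whose total width is strictly smaller than that of $(L, \ttt)$ and such that $(L, \ttt)$ is the chamfer of $(L_1, \ttt_1)$ at some lower-outside corner. Applying the inductive hypothesis to $(L_1, \ttt_1)$, we obtain a ladder $L''$ which is non-degenerate with respect to a constant tuple $(t, \ldots, t) \in \NN^u$ and a finite sequence of chamferings taking $L''$ to $L_1$. Prepending this sequence to the single chamfering from $L_1$ to $L$ gives the desired finite sequence of chamferings from $L''$ to $L$.

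Since at each inductive step the total width strictly decreases within the nonnegative integers, the induction terminates after finitely many steps at the base case, producing the required $L''$ and constant tuple.

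The argument is essentially a formal bookkeeping once \autoref{prop_chamfer}(2) is in hand, so there is no significant obstacle here: all of the content has been packaged into the preceding proposition, which guarantees that the operation of ``un-chamfering'' can always be performed while strictly reducing the total width, and that non-degeneracy is preserved. The only thing one must be careful about is simply that the induction is on a nonnegative integer invariant, so that termination is automatic.
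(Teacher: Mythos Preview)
Your proof is correct and follows essentially the same approach as the paper: the paper's proof is simply ``We proceed by descending induction on the total width of the ladder, making a repeated use of \autoref{prop_chamfer}(2),'' which is exactly the argument you have spelled out in detail.
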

\begin{proof}
We proceed by descending induction on the total width of the ladder, making a repeated use of \autoref{prop_chamfer} (2).
\end{proof}

For an ideal $I$ in a commutative ring with identity $A$, we denote the {\it symbolic Rees algebra} of $I$ by $\R^s(I)=\oplus_{n\gs 0}I^{(n)}T^n\subseteq A[T]$, where $T$ is a variable. Sometimes we write  $\R^s_A(I):=\R^s(I)$ if the ambient ring needs to be specified. We denote by $\gr^s(I)=\oplus_{n\gs 0}I^{(n)}/I^{(n+1)}$ the {\it symbolic associated graded algebra} of $I$. 

\begin{theorem} \label{ThmNoetherian}
Assume \autoref{setup_chamfer}, and let $L$ be a non-degenerate ladder with respect to $\ttt$. Then $\R^s(I_\ttt(L))$ and $\gr^s(I_\ttt(L))$ are Noetherian rings.
\end{theorem}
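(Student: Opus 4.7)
The plan is to argue by induction on the total width of $(L,\ttt)$. If the total width is zero then $\ttt=(t,\ldots,t)$ and $I_{\ttt}(L)=I_t(L)$ is an unmixed ladder determinantal ideal; its symbolic Rees algebra is Noetherian (it even admits a finite SAGBI basis) by \cite[Theorem 4.1]{brunsconca98}, and $\gr^s(I_t(L))$ is Noetherian as its graded quotient.

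For the inductive step, assume the total width is positive. By \autoref{prop_chamfer}(2), I can realize $(L,\ttt)$ as the chamfer of a non-degenerate pair $(L'',\ttt'')$ of strictly smaller total width at a corner $x:=x_{d_j+1,c_j-1}$ of $L''$; thus $L\subsetneq L''$ and $\ttt''=(t_1,\ldots,t_{j-1},t_j+1,t_{j+1},\ldots,t_v)$. The inductive hypothesis gives that $\R^s(I_{\ttt''}(L''))$ is Noetherian. The central step is to compare the two symbolic Rees algebras via localization at $x$. Since $x$ sits at a corner of the subladder $L''_j$, in $\kk[L''][x^{-1}]$ I would perform the classical Schur-complement change of coordinates, replacing each entry $x_{b,a}$ in $L_j$ by $\tilde x_{b,a}:=x_{b,a}-x_{b,c_j-1}\,x^{-1}\,x_{d_j+1,a}$. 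This is a triangular $\kk$-algebra automorphism of $\kk[L''][x^{-1}]$ under which the matrix $L''_j$ becomes block-diagonal with $x$ as one block and $L_j$ (in the $\tilde x$ variables) as the other; consequently the ideal of $(t_j+1)$-minors of $L''_j$ is identified with the ideal of $t_j$-minors of $L_j$, while the entries of $L''\setminus(L\cup\{x\})$ become algebraically independent parameters $y_1,\ldots,y_m$ over $\kk[L]$. After checking that the remaining generators $I_{t_i}(L''_i)$ for $i\neq j$ fit consistently with this substitution, the expected outcome is a $\kk$-algebra isomorphism
\[
\kk[L''][x^{-1}]\big/I_{\ttt''}(L'')\kk[L''][x^{-1}]\;\cong\;\bigl(\kk[L]/I_{\ttt}(L)\bigr)\bigl[x^{\pm 1},y_1,\ldots,y_m\bigr].
\]
Since $I_{\ttt''}(L'')$ is prime and $x\notin I_{\ttt''}(L'')$, symbolic powers commute with this localization, so the identification promotes to
\[
\R^s(I_{\ttt''}(L''))[x^{-1}]\;\cong\;\R^s(I_{\ttt}(L))\bigl[x^{\pm 1},y_1,\ldots,y_m\bigr].
\]
The left-hand side is Noetherian as a localization of a Noetherian ring; specializing $x\mapsto 1$ and $y_i\mapsto 0$ on the right-hand side realizes $\R^s(I_{\ttt}(L))$ as a quotient of a Noetherian ring, so $\R^s(I_{\ttt}(L))$ is Noetherian, as is its graded quotient $\gr^s(I_{\ttt}(L))$.

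The hard part will be establishing the displayed identification of coordinate rings, especially the claim that the subladder ideals $I_{t_i}(L''_i)$ for $i\neq j$ pull back to $I_{t_i}(L_i)\cdot\kk[L''][x^{-1}]$ under the Schur-complement change of coordinates. The subladders $L''_i$ with $i>j$ overlap with the Schur-reduced area, so I would have to expand their minors carefully and verify that the resulting ideal coincides with the one generated by the $t_i$-minors of $L_i$ modulo the free parameters $y_1,\ldots,y_m$. This is a bookkeeping extension of the matrix-algebra reductions already appearing in the work of Conca--Herzog \cite{concaherzog1997} and Gorla \cite{GorlaMixed} on (mixed) ladder determinantal ideals.
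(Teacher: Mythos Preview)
Your proposal is correct and follows essentially the same route as the paper: reduce to the unmixed case via the chamfering/Schur-complement localization (which is precisely Gorla's \cite[Lemma~1.19]{GorlaMixed}, cited directly in the paper rather than re-derived) and then invoke Bruns--Conca for the unmixed symbolic Rees algebra. The only cosmetic differences are that the paper performs the reduction in one shot via \autoref{coroll_chamfer} rather than step-by-step induction on the total width, and concludes by observing that $\R^s(I_{\ttt}(L))$ is a \emph{direct summand} of the Noetherian Laurent/polynomial extension rather than a quotient via your specialization $x\mapsto 1$, $y_i\mapsto 0$.
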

\begin{proof}
It suffices to show the statement for $\R^s(I_\ttt(L))$. By Corollary \ref{coroll_chamfer} there exists a ladder $L''$ which is non-degenerate with respect to $(t,\ldots,t) \in \NN^u$, and $L$ is obtained from $L''$ by a finite sequence of chamferings. Then by Gorla's work on mixed ladder determinantal ideals \cite[Lemma 1.19]{GorlaMixed} there are indeterminates $\{y_1,\ldots,y_d\}$ of $R''=\kk[L'']$ and $c \leq d$ such that 
\[
I_\ttt(L)[y_1,\ldots,y_d,y_1^{-1},\ldots y_c^{-1}] \cong I_t(L'')[y_1^{-1},\ldots y_c^{-1}].
\]
From this isomorphism we conclude that for all $n \geq 1$
\[
I_\ttt(L)^{(n)}[y_1,\ldots,y_d,y_1^{-1},\ldots y_c^{-1}] \cong I_t(L'')^{(n)}[y_1^{-1},\ldots y_c^{-1}].
\]
Since $\R^s_{R''}(I_t(L''))$ is Noetherian \cite{brunsconca98}, so is $\R^s_R(I_\ttt(L))[y_1,\ldots,y_d,y_1^{-1},\ldots y_c^{-1}]$. As $\R^s_R(I_\ttt(L))$ is a direct summand of the latter, we conclude that it is itself Noetherian.
\end{proof}

\begin{remark} Unmixed ladder determinantal ideals define $F$-rational singularities in characteristic $p\gg0$, and rational singularities in characteristic zero, by work of Conca and Herzog \cite{concaherzog1997}. It follows from \autoref{coroll_chamfer} that this is also true for mixed ladder determinantal ideals since being $F$-rational or having rational singularities localizes, and because if $S$ is a ring and $x$ is an indeterminate, one has that $S$ is $F$-rational (resp. has rational singularities) if and only if $S[x]$ is $F$-rational (resp. has rational singularities).
\end{remark}

%%%%%%%%%%%%%%%%%%%%%%%%%%%%%%%%%%%%%%%%%%%%%%%%%%%%%%%%%
\section{Mixed ladder determinantal ideals}\label{MLDSection}
%%%%%%%%%%%%%%%%%%%%%%%%%%%%%%%%%%%%%%%%%%%%%%%%%%%%%%%%%

In this section we prove the main results of this paper regarding the $F$-singularities of ladder determinantal rings and their blowup algebras. Throughout we use the following setup:

%To prove the next theorem, it is useful to introduce some notation:

\begin{setup} \label{setup_mainsec_ladder} 
We adopt \autoref{setup_chamfer}, and when the base field $\kk$ has characteristic $p>0$ we further assume that it is $F$-finite. For  $r=2,\ldots ,k+\ell$ we denote by $\mathcal{D}_r$ the antidiagonal at level $r$ of $X$, namely, 
\[\mathcal{D}_r=\{x_{i,j}\in X\mid i+j=r\}\subseteq X.\]
% Given  $\ttt=(t_1,\ldots,t_v) \in \ZZ_{>0}^v$, 
We define $L^\circ_{\ttt} \subseteq L$ to be the subladder with the same upper-outside corners as $L$, and with lower-outside corners  $\{x_{d_j-t_j+1,c_j+t_j-1} \mid 1 \leq j \leq v\}$. Notice that $L^\circ_{\ttt} =\bigcup_{i=1}^v (L_j)^\circ_{t_j}$ (see \autoref{eq_Lj}).
\end{setup}
%By a result of Gorla, we have that $\Ht(I_t(L)) = |L^\circ|$ \cite[Theorem 1.15]{GorlaMixed}. 

The following is the first main result of this section. 

\begin{theorem}\label{ThmMixLadderSFP}
Assume \autoref{setup_mainsec_ladder} with   $\mathbb{k}$ of  characteristic $p>0$. %Let $X$ be a $k \times l$ matrix of indeterminates and $L\subseteq  X$ a ladder. For $\ttt=(t_1,\ldots,t_v) \in \ZZ_{>0}^v$, we let $I_{\ttt}(L) \subseteq R=\mathbb{K}[L]$ be the corresponding mixed ladder determinantal ideal. 
Then $I_{\ttt}(L)$ is symbolic $F$-split.
\end{theorem}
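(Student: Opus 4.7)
The plan is to verify the sufficient condition in \autoref{rem_impt_prop_symb}(2): produce an antidiagonal term order $\prec$ on $R$ for which $\init_{\prec}(I_\ttt(L)^{(H)})$ contains a squarefree monomial, where $H=\height(I_\ttt(L))$. The core idea is to reduce to the unmixed case via \autoref{coroll_chamfer} and pull back a squarefree monomial through Gorla's localization isomorphism.

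I would first dispatch the unmixed case $\ttt=(t,\dots,t)$. By \cite[Theorem 4.1]{brunsconca98}, $\R^s(I_t(L))$ admits a finite SAGBI basis with respect to an antidiagonal term order $\prec$; a standard consequence is the equality $\init_{\prec}(I_t(L)^{(n)})=\init_{\prec}(I_t(L))^{(n)}$ for every $n\gs 0$. Set $J:=\init_{\prec}(I_t(L))$, which is a squarefree monomial ideal generated by the antidiagonals of the $t$-minors contained in $L$. Being radical, $J^{(H)}=\bigcap_{P\in\MIN(J)}P^{H}$. Each $P\in\MIN(J)$ is generated by variables with $\height(P)\gs \height(J)=H$, so it contains at least $H$ variables of $L$; the product of any $H$ such variables lies in $P^{H}$ and divides $\prod_{x\in L}x$. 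Hence $\prod_{x\in L}x\in P^{H}$ for every such $P$, and intersecting yields $\prod_{x\in L}x\in J^{(H)}=\init_{\prec}(I_t(L)^{(H)})$, to which \autoref{rem_impt_prop_symb}(2) applies.

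For general $\ttt$, I would invoke \autoref{coroll_chamfer} to obtain an unmixed ladder $L''$ and an integer $t$ such that $(L,\ttt)$ arises from $(L'',(t,\dots,t))$ by finitely many chamferings. By \cite[Lemma 1.19]{GorlaMixed}, there are variables $y_1,\dots,y_d$ of $R'':=\kk[L'']=R[y_1,\dots,y_d]$ and an integer $c\ls d$ such that, setting $S:=R''[y_1^{-1},\dots,y_c^{-1}]$,
\[
I_\ttt(L)\cdot S \;=\; I_t(L'')\cdot S.
\]
Heights are preserved under this extension-localization, so $H=\height(I_\ttt(L))=\height(I_t(L''))$. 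Extending $\prec$ lexicographically to an antidiagonal term order on $R''$ (and hence on $S$), both symbolic powers and initial ideals commute with polynomial extension and with localization at a monomial, yielding
\[
\init_{\prec}\!\bigl(I_\ttt(L)^{(H)}\bigr)\cdot S \;=\; \init_{\prec}\!\bigl(I_t(L'')^{(H)}\bigr)\cdot S.
\]
By the unmixed case, $\prod_{x\in L''}x$ lies in the right-hand side, hence in the left-hand side. Since $\init_{\prec}(I_\ttt(L)^{(H)})$ is a monomial ideal in $R$ whose extension to $S$ remains monomial, some generator $m\in \init_{\prec}(I_\ttt(L)^{(H)})$ divides $\prod_{x\in L''}x=\prod_{x\in L}x\cdot y_1\cdots y_d$ in $S$. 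As $m$ involves none of the $y_i$ and $S$ does not invert any variable of $R$, we have $m\mid \prod_{x\in L}x$ in $R$, so $m$ is the required squarefree monomial in $\init_{\prec}(I_\ttt(L)^{(H)})$, and \autoref{rem_impt_prop_symb}(2) concludes the proof.

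The main technical burden is the bookkeeping in the mixed case: making sure that Gorla's isomorphism identifies the two ideals as literally equal inside a common ambient ring, and that the antidiagonal term order on $\kk[X]$ restricts compatibly to $R$, $R''$, and $S$ so that $\init_{\prec}$ genuinely commutes with the polynomial and Laurent extensions involved. Once these are in hand, the descent of the squarefree monomial from $S$ back to $R$ is immediate because the candidate $m$ already lies in $R$.
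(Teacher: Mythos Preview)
Your reduction to the unmixed case has a genuine gap. Gorla's \cite[Lemma~1.19]{GorlaMixed} does \emph{not} give an equality $I_\ttt(L)\cdot S = I_t(L'')\cdot S$ of extended ideals; it gives a ring \emph{isomorphism} $\theta$ carrying one ideal to the other (compare how it is used in the proofs of \autoref{ThmNoetherian} and \autoref{stronglyF}). This $\theta$ is the standard determinantal localization trick (cf.\ \cite[(2.4) and Lemma~10.1]{BookDet}): after inverting a corner entry $x_{d,c}$, one substitutes $x_{i,j}\mapsto x_{i,j}-x_{i,c}\,x_{d,j}\,x_{d,c}^{-1}$ for the interior variables. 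This is not a monomial change of coordinates, and it does \emph{not} commute with taking initial ideals for an antidiagonal order. Hence the displayed identity $\init_\prec\bigl(I_\ttt(L)^{(H)}\bigr)\cdot S = \init_\prec\bigl(I_t(L'')^{(H)}\bigr)\cdot S$ is unjustified, and the descent of a squarefree monomial from $L''$ back to $L$ breaks down. This is why the paper uses the chamfering/Gorla isomorphism only to transfer isomorphism-invariant properties (Noetherianity, strong $F$-regularity), never initial-ideal data.

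There is a secondary gap already in your unmixed case. The equality $\init_\prec\bigl(I_t(L)^{(n)}\bigr)=\bigl(\init_\prec I_t(L)\bigr)^{(n)}$ is not a formal consequence of the existence of a finite SAGBI basis: SAGBI identifies $\bigoplus_n\init_\prec\bigl(I_t(L)^{(n)}\bigr)$ with the subalgebra generated by the initial terms of the basis, but recognising this as $\R^s\bigl(\init_\prec I_t(L)\bigr)$ is a separate statement. The paper itself only derives that equality for $p\gg 0$ in \autoref{cor_SAGBI}, and the containment that holds in general (\cite[Proposition~5.1]{Sull}) goes the wrong direction for your argument, since you need $\bigl(\init_\prec I_t(L)\bigr)^{(H)}\subseteq \init_\prec\bigl(I_t(L)^{(H)}\bigr)$.

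The paper circumvents both issues by constructing directly, for the mixed ladder, an explicit product $f=\prod_{r\in\mathcal{B}}\det(Y_r)$ of determinants of square submatrices sitting along the antidiagonals of $L$. Using \cite[Proposition~10.2]{BookDet} each factor lies in a prescribed symbolic power of $I_\ttt(L)$, and the exponents sum to $h=|L^\circ_\ttt|$ by Gorla's height formula; the antidiagonals of the $Y_r$ are pairwise disjoint, so $\init_\prec(f)$ is squarefree. No reduction to the unmixed case, and hence no appeal to $\theta$, is needed.
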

\begin{proof}
Let $h = \Ht\left(I_{\ttt}(L)\right)$. It suffices to find an element $f \in I_{\ttt}(L)^{(h)}$ and a monomial order $\prec$ such that $\IN_\prec(f)$ is squarefree \cite[Lemma 6.2(1)]{DSMNB}. 
Let 
\begin{equation}\label{eq_inv1}
\mathcal{A} = \{2 \leq r \leq k+\ell \mid \mathcal{D}_r\cap L^\circ_\ttt\neq\emptyset\},
\end{equation}
and for every $r \in \mathcal{A}$  set 
\begin{equation}\label{eq_inv2}
w_r=\max\{w\mid x_{w,r-w}\in \mathcal{D}_r\cap L^\circ_\ttt\}
\qquad
\text{and}
\qquad
p_r = \max\{j \mid x_{w_r, r-w_r}\in  (L_j)^\circ_{t_j} \}.
\end{equation}
Furthermore,  set 
\begin{equation}\label{eq_inv3}
a_r=\min\left\{i\mid x_{i,r-i}\in L_{p_r}\cap \mathcal{D}_r\right\} \qquad \text{and}\qquad b_r=\max\left\{i\mid x_{i,r-i}\in L_{p_r}\cap \mathcal{D}_r\right\}.
\end{equation}
Let $\gamma_r= b_r-a_r+1$ and observe that the $\gamma_r \times \gamma_r$ matrix $Y_r=X^{[a_r,b_r]}_{[r-b_r,r-a_r]}$ is contained in $L_{p_r}$ (see \autoref{notation_affine_subsec}). 
  We note that %$\det(Y_r) \in I_{\ttt}(L)^{\left(c_r-t_{p_r}+1\right)}$. Indeed, since $x_{w_r,r-w_r} \in \Lambda_{p_r}$ we have that $Y_r \subseteq L_{p_r}$. It follows that 
\begin{equation}\label{eq_detY}
\det(Y_r) \in I_{\gamma_r}(Y_r) \subseteq I_{t_{p_r}}(Y_r)^{\left(\gamma_r-t_{p_r}+1\right)} \subseteq I_{t_{p_r}}(L_{p_r})^{\left(\gamma_r-t_{p_r}+1\right)} \subseteq I_{\ttt}(L) ^{\left(\gamma_r-t_{p_r}+1\right)},	
\end{equation}
where the first inclusion follows from \cite[Proposition 10.2]{BookDet}. Set 
\begin{equation}\label{eq_f}
f=\prod_{r \in \mathcal{A}} \det(Y_r)
\end{equation}
 We claim that $f \in I_{\ttt}(L)^{(h)}$. Observe that for every $r \in \mathcal{A}$, the positive integer $\gamma_r-t_{p_r}+1$ counts  the number of elements on the main antidiagonal of $Y_r$ that belong to $L^\circ_\ttt$. 
Moreover, %By our choice of the matrices $Y_r$, 
every entry of $L^\circ_\ttt$  belongs to the main antidiagonal of  $Y_r$ for exactly one $r \in \mathcal{A}$. Thus, by  \cite[Theorem 1.15]{GorlaMixed} we  have 
\begin{equation}\label{eq_sum_height}
	\sum_{r \in \mathcal{A}}(\gamma_r-t_{p_r} + 1) = |L^\circ_\ttt| = h.
\end{equation}
 Therefore, \autoref{eq_detY} implies
\[
f \in \prod_{r \in \mathcal{A}} I_{\ttt}(L)^{\left(\gamma_r-t_{p_r}+1\right)} \subseteq I_{\ttt}(L)^{(h)},
\]
whence the claim follows. Finally, if $\prec$ is any antidiagonal term order, then $\IN_\prec(f)$ is squarefree, completing the proof.
\end{proof}

As a consequence of \autoref{ThmMixLadderSFP} we %answer a question of Conca and Herzog on $F$-purity of algebras defined by ladder determinantal ideals \cite[p. 122]{concaherzog1997}. Much more generally, we 
obtain that symbolic blowup algebras of mixed ladder determinantal ideals, and hence also rings defined by such ideals, are $F$-pure.

\begin{corollary}\label{blowups_f_pure}
Assume \autoref{setup_mainsec_ladder} with   $\mathbb{k}$ of  characteristic $p>0$.
Then the blowup algebras $\R^s\left(I_{\ttt}(L)\right)$ and $\gr^s\left(I_{\ttt}(L)\right)$ are $F$-pure. In particular, $R/I_\ttt(L)$ is $F$-pure.
\end{corollary}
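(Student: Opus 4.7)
The plan is to obtain the corollary as an essentially immediate consequence of \autoref{ThmMixLadderSFP} and the structural properties of symbolic $F$-split ideals recalled in \autoref{rem_impt_prop_symb}(1).

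First, I would invoke \autoref{ThmMixLadderSFP} to conclude that $I_\ttt(L)$ is symbolic $F$-split. By \autoref{rem_impt_prop_symb}(1) (that is, by \cite[Theorem 4.7]{DSMNB}), this immediately gives that both $\R^s(I_\ttt(L))$ and $\gr^s(I_\ttt(L))$ are $F$-split, and likewise that $R/I_\ttt(L)$ is $F$-split, which is the content of the ``in particular'' clause.

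The only remaining point is to upgrade ``$F$-split'' to ``$F$-pure'' in the statement. For this I would note that under \autoref{setup_mainsec_ladder} the field $\kk$ is $F$-finite, so the polynomial ring $R=\kk[L]$ is $F$-finite, and hence so is $R/I_\ttt(L)$. Moreover, by \autoref{ThmNoetherian} the algebras $\R^s(I_\ttt(L))$ and $\gr^s(I_\ttt(L))$ are Noetherian, hence finitely generated $\kk$-algebras (or finitely generated modules over such, in the case of $\gr^s$), and therefore also $F$-finite. For $F$-finite rings, $F$-split and $F$-pure coincide, as recalled in \autoref{DefFbasic}, so the conclusion follows.

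There is really no obstacle here: the corollary is designed to package \autoref{ThmMixLadderSFP} together with the already-recorded general principle that symbolic $F$-splitness propagates to the symbolic blowup algebras. The only item one must not forget to mention is the $F$-finiteness of the blowup algebras (guaranteed by \autoref{ThmNoetherian}), which is what licenses the passage from $F$-split to $F$-pure.
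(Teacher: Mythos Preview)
Your proof is correct and follows the same route as the paper: invoke \autoref{ThmMixLadderSFP} to get that $I_\ttt(L)$ is symbolic $F$-split, then apply \cite[Theorem 4.7]{DSMNB} (i.e., \autoref{rem_impt_prop_symb}(1)). The paper deduces the ``in particular'' by noting that $R/I_\ttt(L)$ is a direct summand of $\gr^s(I_\ttt(L))$, while you cite \autoref{rem_impt_prop_symb}(1) directly; both are fine.

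One minor point: your paragraph about ``upgrading $F$-split to $F$-pure'' via $F$-finiteness and \autoref{ThmNoetherian} is unnecessary and slightly backwards. As recorded in \autoref{DefFbasic}, $F$-split always implies $F$-pure; $F$-finiteness is only needed for the converse. So once the blowup algebras are $F$-split, $F$-purity is automatic and there is nothing further to verify.
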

\begin{proof}
The first claim follows from  \autoref{ThmMixLadderSFP} and properties of symbolic $F$-split ideals \cite[Theorem 4.7]{DSMNB}. The second follows from the fact that $R/I_\ttt(L)$ is a direct summand of $\gr^s(I_\ttt(L))$.
\end{proof}

Let $S=\kk[x_1,\ldots, x_d]$ be a standard graded polynomial ring, and $\m=(x_1,\ldots,x_d)$. For a finitely generated $\ZZ$-graded $S$-module $M$ we  denote by $\reg(M)$ its   {\it Castelnuovo-Mumford regularity}, i.e., $\sup\{j+i \mid H^i_\m(M)_j \ne 0\}$.

\begin{corollary}\label{cor_RegDEpth}
Assume \autoref{setup_mainsec_ladder} with   $\mathbb{k}$ of  characteristic $p>0$.  Then,
$\lim\limits_{n\to\infty}\frac{\reg\left(R/I_\ttt(L)^{(n)}\right)}{n}$ exists and the sequence
$\{\Depth(R/I_\ttt(L)^{(n)})\}_{n \in \NN}$ is constant for $n\gg0$, with value equal to $\min\{\depth(R/I_\ttt(L)^{(n)})\}_{n \in \NN}$.
\end{corollary}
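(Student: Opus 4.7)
The proof will combine the Noetherianity of the symbolic Rees algebra (Theorem \ref{ThmNoetherian}) with the $F$-purity of its symbolic blowup algebras (Corollary \ref{blowups_f_pure}) and the symbolic $F$-split property (Theorem \ref{ThmMixLadderSFP}). The plan has three independent steps, each handling one piece of the statement.

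First, for the existence of the regularity limit: since $\R^s(I_\ttt(L))$ is a finitely generated $\NN$-graded $R$-algebra by \autoref{ThmNoetherian}, I would invoke the standard result of Cutkosky--Herzog--Trung (or rather its symbolic version due to Herzog--Hoa--Trung and its extensions by Trung--Tuan and Dao--De Stefani--Ha--Nguyen) which asserts that whenever $\R^s(I)$ is Noetherian, the limit
\[
\lim_{n\to\infty}\frac{\reg(R/I^{(n)})}{n}
\]
exists (and in fact equals a rational number controlled by the generators of $\R^s(I)$). Applying this to $I=I_\ttt(L)$ gives the first assertion.

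Second, for the eventual constancy of depth: the Noetherianity of the symbolic Rees algebra is known (for example, by results of Hoa--Trung and Nguyen--Trung in the symbolic setting) to imply that $\{\depth(R/I_\ttt(L)^{(n)})\}_{n\in\NN}$ is eventually constant. The argument proceeds by filtering $R/I^{(n)}$ via the short exact sequences
\[
0 \to I^{(n)}/I^{(n+1)} \to R/I^{(n+1)} \to R/I^{(n)} \to 0,
\]
and using that $\gr^s(I_\ttt(L))$ is Noetherian (again by \autoref{ThmNoetherian}) to guarantee that the depths of the graded components $I^{(n)}/I^{(n+1)}$ stabilize.

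Third, and this is the step where $F$-singularities enter decisively, I need to identify the eventual constant value with $\min_n \depth(R/I_\ttt(L)^{(n)})$. For this I would use \autoref{ThmMixLadderSFP}: since $I_\ttt(L)$ is symbolic $F$-split, a result of De Stefani--Montaño--Núñez-Betancourt \cite{DSMNB} ensures that the depth function is non-increasing in $n$. Combined with the eventual constancy from the previous step, this forces the stable value to equal the global minimum.

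The main obstacle I expect is making sure the classical results on existence of $\lim \reg/n$ and eventual constancy of depth are stated in the correct generality for symbolic powers (not just ordinary powers); once that is settled, the symbolic $F$-split property delivers the monotonicity that pins down the minimum. No hard new computation is required: the result is essentially a consequence, for mixed ladder determinantal ideals, of combining Noetherianity of the symbolic Rees algebra with the symbolic $F$-split property established earlier in the section.
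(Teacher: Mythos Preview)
Your proposal is correct and follows essentially the same approach as the paper: the paper's proof simply cites \cite[Theorem~4.10]{DSMNB} together with \autoref{ThmMixLadderSFP} and the Noetherianity of $\R^s(I_\ttt(L))$ from \autoref{ThmNoetherian}, which packages all three of your steps into one reference. Your decomposition into (i) Noetherianity $\Rightarrow$ existence of the regularity limit, (ii) Noetherianity $\Rightarrow$ eventual constancy of depth, and (iii) symbolic $F$-splitness $\Rightarrow$ monotonicity of depth, is exactly what underlies that cited theorem, so the ingredients and the logic coincide.
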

\begin{proof}
The statements follow from  \autoref{ThmMixLadderSFP} and properties of symbolic $F$-split ideals \cite[Theorem 4.10]{DSMNB}, using the fact that $\cR^s(I_\ttt(L))$ is Noetherian by \autoref{ThmNoetherian}. 
\end{proof}

%For the convenience of the reader, we record the following statement which  combines results from previous work \cite{DSMNB,KoleyVarbaro}.
%
%\begin{proposition}\label{symbolic<-x>initial}
%Let $I\subseteq S=\mathbb{k}[x_1,\ldots ,x_d]$ be a homogeneous prime ideal of height $h$ and $\prec$ a term order on $S$.  If there exists $f\in I^{(h)}$ with $\IN_\prec(f)$ squarefree, then $\IN_\prec(I)$ is a squarefree monomial ideal. Furthermore, if $\init_\prec\left(\cR^s(I)\right)$ is generated as an $S$-algebra by forms of degrees $b_1,\ldots ,b_m$ and $\mathbb{k}$ has characteristic $p>\lcm(b_1,\ldots ,b_m)$, then $\IN_\prec(I^{(n)})=(\IN_\prec(I))^{(n)}$ for all $n\in\NN$.
%\end{proposition}
%\begin{proof}
%The monomial $\IN_\prec(I)$ is squarefree \cite[Theorem 3.13]{KoleyVarbaro}. 
%It follows that the graded algebra   $\bigoplus_{n\in\NN}\IN_\prec(I^{n})/\IN_\prec(I^{n+1})$ is reduced  \cite[Proposition 7.5]{DSMNB}. 
%Finally, we conclude that $\IN_\prec(I^{(n)})=(\IN_\prec(I))^{(n)}$ for all $n\in\NN$
%\cite[Corollary 7.10]{DSMNB}.
%\end{proof}

We also show that symbolic powers commute with taking initial ideals for unmixed ladder determinantal ideals.
\begin{corollary}\label{cor_SAGBI}
Assume \autoref{setup_mainsec_ladder} with   $\mathbb{k}$ a perfect field of  characteristic $p>0$, and let $I=I_\ttt(L)$. %For $t \in \ZZ_{>0}$, we let $I_t(L) \subseteq R=\mathbb{k}[L]$ be the corresponding unmixed ladder determinantal ideal.  
Let $\prec$ be an antidiagonal term order, and assume that $\IN_\prec\left(\cR^s(I)\right) := \bigoplus_{n \geq 0} \IN_\prec\left(I^{(n)}\right)$ is Noetherian. If $p \gg 0$, then we have
\[
\IN_\prec\left(I^{(n)}\right)=\left(\IN_\prec \left(I\right)\right)^{(n)} \,\,\, \text{ for every }n\in\NN.
\]
In particular, if $I=I_t(L)$ for some $t \in \ZZ_{>0}$, then the above equality holds for all $p>(\min\{k,\ell\}-t+1)!$.
\end{corollary}
\begin{proof}
This follows at once from  \autoref{symbolic-initial}. For the case $I=I_t(L)$, the bound for $p$ follows from \autoref{remark_char} and the fact that $\IN_\prec(\cR^s(I))$ is generated as an $R$-algebra by forms of degree at most $\min\{k,\ell\}-t+1$  \cite[Theorem 4.1]{brunsconca98}. 
\end{proof}

%\begin{remark}
%It is known  that taking  symbolic powers and initial ideals of determinantal ideals are commuting operations  in any characteristic \cite[Proposition 7.2]{BCsymbinit}. From  \autoref{cor_SAGBI}  it follows that this fact holds more generally for unmixed ladder determinantal ideals. We note that our proof is different in nature as the one previously known \cite{BCsymbinit}, which    involves a careful combinatorial analysis.
%\end{remark}

The following technical lemma is needed for the proof of Theorem \ref{stronglyF}.

\begin{lemma}\label{Lemma sfr} Let $B$ be an $F$-finite ring of characteristic $p>0$, $J \subseteq B$ an ideal, and $A=B[x_1,\ldots,x_d]$ a polynomial extension. Let $H=(x_1,\ldots,x_d)$ and  $I=JA+H$. Then $\cR^s_A(I)$ is strongly $F$-regular if and only if $\cR^s_B(J)$ is strongly $F$-regular.
\end{lemma}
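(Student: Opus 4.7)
The plan is to exhibit $\cR^s_B(J)$ as a ring-theoretic retract (hence direct summand) of $\cR^s_A(I)$, which handles the ``only if'' direction immediately. The ``if'' direction is the main obstacle and requires an explicit structural analysis, likely combined with an inductive reduction to the case $d = 1$.

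First I will establish the key identity
\[
I^{(n)} \;=\; \sum_{k=0}^{n} J^{(k)} H^{n-k} \quad \text{for every } n \geq 0.
\]
Because $A/H \cong B$, the minimal primes of $I = JA + H$ are precisely $PA + H$ as $P$ runs over the minimal primes of $J$. Localizing at such a prime and using the $\NN$-grading on $A = B[x_1,\ldots,x_d]$ in which each $x_i$ has degree one, the condition $f = \sum_\alpha f_\alpha x^\alpha \in I^{(n)}$ translates into the monomial-coefficient condition $f_\alpha \in J^{(n-|\alpha|)}$ whenever $|\alpha| < n$; a direct check shows the right-hand side of the identity is characterized by the same condition.

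Next, using the identity, the inclusion $B \hookrightarrow A$ and the retraction $A \twoheadrightarrow A/H \cong B$ induce graded maps
\[
\cR^s_B(J) \;\hookrightarrow\; \cR^s_A(I) \;\twoheadrightarrow\; \cR^s_B(J),
\]
whose composition is the identity on $\cR^s_B(J)$: the first map comes from the summand $J^{(n)} \subseteq I^{(n)}$ (taking $k=n$), and the second from reducing the identity modulo $H$, which leaves only $J^{(n)}$ in degree $n$. Thus $\cR^s_B(J)$ is a ring retract of $\cR^s_A(I)$. Since strong $F$-regularity descends to direct summands of $F$-finite rings, the ``only if'' direction follows at once.

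For the ``if'' direction, I plan to proceed by induction on $d$, the inductive step reducing from $d$ to $d-1$ by applying the lemma with $(B,J)$ replaced by $(A',I')$, setting $A' = B[x_1,\ldots,x_{d-1}]$ and $I' = JA' + (x_1,\ldots,x_{d-1})$, and treating $x_d$ as the single new variable. For the base case $d = 1$, let $S := \cR^s_B(J)$ and $\bar S := S[x]$, both strongly $F$-regular (the latter as a polynomial extension). The identity realizes $\cR^s_A(I) \subseteq A[T]$ as the $\bar S$-subalgebra generated by $xT$, presenting it as a quotient $\bar S[Y] \twoheadrightarrow \cR^s_A(I)$, $Y \mapsto xT$, whose kernel is generated by the ``mixed'' relations $x \cdot (gT^n) - Y \cdot (gT^{n-1})$ arising from each generator $gT^n$ of $S$ with $n \geq 1$. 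The hard part will be to argue this quotient inherits strong $F$-regularity from $\bar S$; I expect this to proceed by inverting an element of $\cR^s_A(I)$ that lies outside every minimal prime, after which the ring becomes a Laurent extension of $\bar S$, combined with a splitting argument extending the retraction above to higher Frobenius powers.
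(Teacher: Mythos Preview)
Your symbolic-power identity and the retract argument for the ``only if'' direction are correct and match the paper's approach exactly.

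The ``if'' direction, however, has a genuine gap. Your plan is to present $\cR^s_A(I)$ (for $d=1$) as a quotient of $\bar S[Y]$ and then argue strong $F$-regularity by ``inverting an element \ldots\ after which the ring becomes a Laurent extension of $\bar S$''. But this does not happen: using your own identity, $\cR^s_A(I)=S[x,xT]\subseteq B[x,T]$, and inverting $x$ (the only natural candidate) yields
\[
\cR^s_A(I)_x \;=\; S[x^{\pm 1},xT] \;=\; S[x^{\pm 1},T] \;=\; B[x^{\pm 1},T],
\]
since $S[T]=B[T]$. This is a polynomial/Laurent ring over $B$, not over $S=\cR^s_B(J)$, so its strong $F$-regularity would require $B$ to be strongly $F$-regular, which is not assumed. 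Inverting elements coming from $S$ runs into the opposite problem: there is no distinguished nonzerodivisor in $J$ to invert in general. The ``splitting argument extending the retraction to higher Frobenius powers'' is left unspecified, and it is not clear how to make it work without already knowing the structural fact the paper uses.

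The paper avoids this entirely by recognizing, from the identity $I^{(n)}=\sum_k (J^{(k)}A)H^{n-k}$, that
\[
\cR^s_A(I)\;=\;\cR^s_B(J)[x_1,\ldots,x_d,\,x_1T,\ldots,x_dT],
\]
and then observing that this ring is (isomorphic to a Segre product and hence) a direct summand of the polynomial extension $\cR^s_B(J)[x_1,\ldots,x_d,Z,T]$. Since strong $F$-regularity passes to polynomial extensions and descends to direct summands, the ``if'' direction follows in one stroke, with no induction on $d$ and no localization needed. This direct-summand-of-a-polynomial-ring realization is the missing idea in your proposal.
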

\begin{proof}
Since $H^{(j)}=H^j$ for all $j \in \ZZ_{>0}$,   we have that $I^{(j)} = \sum_{i=0}^j (JA)^{(i)}H^{j-i}$  \cite[Theorem 3.4]{BinomialSumSP}. Thus
%\[
%\cR^s_R(I) =\bigoplus_{j \in \NN} I^{(j)}T^j =  \bigoplus_{j \in \NN} \left(\sum_{i=0}^j (JR)^{(i)}T^iH^{j-i}T^{j-i}\right) \cong \cR^s_R(JR)[y_1T,\ldots,y_mT].
%\]
\[
\cR^s_A(I) = \cR^s_A(JA)[x_1T,\ldots,x_dT]=\cR^s_B(J)[x_1,\ldots,x_d,x_1T,\ldots,x_dT],
\]
where the second equality holds as   $(JA)^{(i)} = J^{(i)}A = J^{(i)}[x_1,\ldots,x_d]$. Thus $\cR^s_A(I)$ is isomorphic to $\cR^s_B(J)[x_1Z,\ldots,x_dZ,x_1T,\ldots,x_dT]$, with $Z$ another variable, and so isomorphic to the Segre product  $\cR^s_B(J)[x_1,\ldots,x_d]\,
{\mathlarger{\mathlarger\#}}\,\cR^s_B(J)[Z,T]$. 

Now, assume  $\cR^s_A(I)$ is strongly $F$-regular. Since $\cR^s_B(J)$ is a direct summand of $\cR^s_A(I)$, it is strongly $F$-regular  \cite[Theorem 3.1(e)]{HHStrongFreg}. Conversely, if $\cR^s_B(J)$ is strongly $F$-regular, then so is $\cR^s_A(I)$ since the previous description as a Segre product realizes it as a direct summand of the polynomial extension $\cR^s_B(J)[x_1,\ldots,x_d,Z,T]$.
\end{proof}

The following is one of the main results of this paper. We show that the symbolic Rees algebra of any mixed ladder determinantal ideal is strongly $F$-regular. 

\begin{theorem}\label{stronglyF}
Assume \autoref{setup_mainsec_ladder} with   $\mathbb{k}$ of characteristic $p>0$. Then  $\R^s\left(I_{\ttt}(L)\right)$ is strongly $F$-regular.
\end{theorem}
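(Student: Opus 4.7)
The plan is a two-stage reduction, mirroring the structure of the proof of \autoref{ThmNoetherian}. In Stage~1 we reduce the mixed case to the unmixed one. By \autoref{coroll_chamfer}, there is an unmixed ladder $L''$ with respect to $(t,\ldots,t)\in\NN^u$ such that $L$ is obtained from $L''$ by a finite sequence of chamferings. By Gorla's Lemma~1.19 in \cite{GorlaMixed}, there exist indeterminates $y_1,\ldots,y_d$ of $R''=\kk[L'']$ and an integer $c\leq d$ such that
\[
I_\ttt(L)[y_1,\ldots,y_d, y_1^{-1},\ldots,y_c^{-1}] \cong I_t(L'')[y_1^{-1},\ldots,y_c^{-1}],
\]
and this lifts to the corresponding symbolic Rees algebras. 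Since strong $F$-regularity is preserved under localization, $\R^s(I_t(L''))[y_1^{-1},\ldots,y_c^{-1}]$ is strongly $F$-regular whenever $\R^s(I_t(L''))$ is, and hence so is the isomorphic Laurent polynomial extension $\R^s(I_\ttt(L))[y_1,\ldots,y_d, y_1^{-1},\ldots,y_c^{-1}]$. As $\R^s(I_\ttt(L))$ is a direct summand of this Laurent extension (via the constant-Laurent-monomial projection) and strong $F$-regularity descends to $F$-finite direct summands \cite[Theorem~3.1(e)]{HHStrongFreg}, strong $F$-regularity transfers back to $\R^s(I_\ttt(L))$.

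In Stage~2 we handle the unmixed case $\R^s(I_t(L''))$. Bruns-Conca's SAGBI basis theorem \cite[Theorem~4.1]{brunsconca98} provides a finite SAGBI basis for $\R^s(I_t(L''))$ with respect to any antidiagonal term order. The initial algebra in this degeneration is a normal affine semigroup ring (described combinatorially in terms of minors), hence strongly $F$-regular in positive characteristic. The plan is to transfer strong $F$-regularity back to $\R^s(I_t(L''))$ through the SAGBI deformation, for example by placing the total space of the Rees-type degeneration over $\kk[s]$ in a form where \autoref{Lemma sfr} applies, with the deformation parameter playing the role of one of the $x_i$'s in the lemma, thereby converting the lifting into an equivalence of strong $F$-regularities between general and special fibers.

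The main obstacle is the lifting step in Stage~2, since strong $F$-regularity does not deform freely in general. However, the $F$-purity of $\R^s(I_\ttt(L))$ already established in \autoref{blowups_f_pure}, together with the combinatorial rigidity of the SAGBI basis and the flexibility afforded by \autoref{Lemma sfr}, should provide enough structure to carry out the descent. A secondary subtlety is verifying normality of the initial algebra directly from the combinatorics of minors in the unmixed ladder $L''$; once these points are handled, the descent via \autoref{Lemma sfr} combined with the direct summand argument in Stage~1 completes the proof.
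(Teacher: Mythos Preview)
Your Stage~1 reduction from the mixed case to the unmixed case is sound: the isomorphism from \cite[Lemma~1.19]{GorlaMixed} does lift to symbolic powers (this is exactly what the paper does in the proof of \autoref{ThmNoetherian}), and strong $F$-regularity descends to direct summands.

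The genuine gap is in Stage~2. Strong $F$-regularity does \emph{not} deform in general, as you acknowledge, and nothing you propose closes that gap. Your suggestion to use \autoref{Lemma sfr} for the SAGBI degeneration is a misreading of that lemma: it concerns the very specific situation $A=B[x_1,\ldots,x_d]$ and $I=JA+(x_1,\ldots,x_d)$, comparing $\R^s_A(I)$ with $\R^s_B(J)$. A Rees-type degeneration over $\kk[s]$ has no such structure; the parameter $s$ is not one of the generators of the ideal whose symbolic Rees algebra is being taken, and the special fiber is not of the form $\R^s_B(J)$ for any $J$. The $F$-purity from \autoref{blowups_f_pure} does not help either: $F$-pure plus ``degenerates to strongly $F$-regular'' does not imply strongly $F$-regular without a Gorenstein-type hypothesis you have no reason to expect here. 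So as written, Stage~2 is not a proof but a hope.

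The paper avoids this entirely. It argues by induction on $|L|$ and never passes through a degeneration. The key new idea is to build, by modifying the element $f$ from the proof of \autoref{ThmMixLadderSFP}, an element $g\in I_\ttt(L)^{(h-1)}$ whose antidiagonal initial term is squarefree and \emph{misses} a chosen corner variable $x_{k,c_\alpha}$. Via \cite[Lemma~2.6]{GrifoHuneke} this yields a Frobenius splitting of $\R^s(I_\ttt(L))$ sending $x_{k,c_\alpha}^{1/p}$ to $1$, so by \cite[Theorem~3.3]{HHStrongFreg} it suffices to check strong $F$-regularity after inverting $x_{k,c_\alpha}$. At that point Gorla's localization isomorphism \cite[Lemma~1.19]{GorlaMixed} replaces $(L,\ttt)$ by a strictly smaller ladder, and the induction closes (with \autoref{Lemma sfr} used only in the base case $t_v=1$, where the ideal decomposes as a mixed ladder ideal plus an ideal of variables). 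This is the step your proposal is missing: an explicit ``test element'' construction that reduces to a localization where induction applies, rather than a deformation argument.
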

\begin{proof}
%If we prove the theorem for the base field $\overline{\mathbb{K}}$ then the theorem will also follow for $\mathbb{K}$ \cite[Lemma 3.17]{Hashimoto}, therefore we may assume without loss of generality that $\mathbb{K}$ is perfect. 

We proceed by induction on $d=\dim(R)$, i.e., on the number of entries of $L$. The base case $d=1$ is trivial.

If $t_v=1$, then we can write $I_{\ttt}(L)=I_{\ttt'}(L') + I_1(L_v)$ where $L'$ is a proper subladder of $L$ and $\ttt' = (t_1,\ldots,t_{v-1})$. %We have that $\cR^s(I)$ is a polynomial extension of $\cR^s(I_{\ttt'}(L'))$ \cite[Theorem 3.4]{BinomialSumSP}, and if the latter is strongly F-regular then so is the former. 
Let $R' = \mathbb{k}[L']$. Since $\dim(R')<\dim(R)$, by the induction hypothesis we have that $\cR^s_{R'}\left(I_{\ttt'}(L')\right)$ is strongly $F$-regular. It follows that $\cR^s_R\left(I_{\ttt}(L)\right)$ is strongly $F$-regular by \autoref{Lemma sfr}.

Now assume $t_v>1$ and  consider the smallest $\alpha \in \{1,\ldots,v\}$ for which $(k,c_\alpha)$ is a lower-outside corner of $L$. We observe that such an $\alpha$ exists since $(k,c_v)$ is always a lower-outside corner by our  assumptions (see Notation \ref{running assumptions ladder}). %We use the notation introduced before \autoref{ThmMixLadderSFP}.
Let $\mathcal{A},p_r, a_r, b_r$ be as in \autoref{eq_inv1}, \autoref{eq_inv2}, and \autoref{eq_inv3}. For every $r\in\mathcal{A}$ set $\gamma_r= b_r-a_r+1$ and  $Y_r=X^{[a_r,b_r]}_{[r-b_r,r-a_r]}$. 
 By \autoref{eq_detY} we have $\det(Y_r) \in I_\ttt(L)^{\left(\gamma_r-t_{p_r}+1\right)}$.  
 Note that $\beta:=k+\gamma_\alpha \in \mathcal{A}$, $b_\beta = k$,  and $p_\beta = v$. 
 Set $Y=X^{[a_{\beta},k-1]}_{[c_\alpha+1,\beta-a_{\beta}]}$ and observe that 
$ \det(Y) \in I_{\gamma_\beta-1}(Y) \subseteq I_{t_v}(Y)^{(\gamma_\beta-t_v)}\subseteq  I_\ttt(L)^{(\gamma_\beta-t_v)}$  \cite[Proposition 10.2]{BookDet}.
Set $$
\mathcal{A}\,' = \mathcal{A} \smallsetminus \{\beta\}
\qquad 
\text{and} 
\qquad
g=\det(Y)  \prod_{r \in \mathcal{A}\,'} \det(Y_r).$$ We claim that $g \in I_\ttt(L)^{(h-1)}$, where $h=\Ht\left(I_\ttt(L)\right)$. 
For every $r \in \mathcal{A}$, the positive integer $\gamma_r-t_{p_r}+1$ counts  the number of elements on the main antidiagonal of $Y_r$ that belong to $L^\circ_\ttt$. By \autoref{eq_sum_height} we  have
 $$(\gamma_{\beta}-t_v)+\sum_{r \in \mathcal{A}\,'}(\gamma_r-t_{p_r} + 1) =\sum_{r \in \mathcal{A}}(\gamma_r-t_{p_r} + 1)=  h-1.$$ 
 Therefore $g \in I_\ttt(L)^{(\gamma_{\beta}-t_v)}  \prod_{r \in \mathcal{A}\,'} I_\ttt(L)^{(\gamma_r-t_{p_r}+1)} \subseteq I_\ttt(L)^{(h-1)}$, which proves the claim.

  Now let $\prec$ be any antidiagonal term order and note that  $\IN_\prec(g)$ is a squarefree monomial not divisible by $x_{k,c_\alpha}$. Moreover,  we have $g^{p-1}\in \left(I_{\ttt}(L)^{(n)}\right)^{[p]}:I_{\ttt}(L)^{(np)}$ for every $n\in \NN$ \cite[Lemma 2.6]{GrifoHuneke}. Thus there exists an $\cR^s_R\left(I_{\ttt}(L)\right)$-homomorphism
  $\Psi:\cR^s_R\left(I_{\ttt}(L)\right)^{1/p}\to\cR^s_R\left(I_{\ttt}(L)\right)$ such that 
 $\Psi\left(x_{k,c_\alpha}^{1/p}\right)=1$ (cf. proof of \cite[Theorem 6.7]{DSMNB}).

Since $\R^s(I_\ttt(L))$ is Noetherian by \autoref{ThmNoetherian}, and $\kk$ is $F$-finite, we have that $\R^s(I_\ttt(L))$ is $F$-finite. By  \cite[Theorem 3.3]{HHStrongFreg}, to finish the proof it suffices to show that $\cR^s_R(I_{\ttt}(L))_{x_{k,c_\alpha}}$ is strongly $F$-regular. Let $L''$ be the ladder obtained from $L$ by deleting the entries $$E=\{x_{d_{\alpha-1}+1,c_\alpha},x_{d_{\alpha-1}+2,c_\alpha},\ldots,x_{k,c_\alpha},x_{k,c_\alpha+1},\ldots,x_{k,c_{\alpha+1}-1}\}$$ from $L$. Set $R''=\mathbb{k}[L'']$ and $S=R''[E]_{x_{k,c_\alpha}}$.  There is an isomorphism 
  $$\theta:R_{x_{k,c_\alpha}} \to S 
\quad \text{such that} 
\quad 
\theta\left(I_\ttt(L)_{x_{k,c_\alpha}}\right) = I_{\ttt''}(L'')S,
  $$ 
  where $\ttt''=(t_1,\ldots,t_{\alpha -1},t_\alpha -1,t_{\alpha +1}, t_{v-1},t_v)$ and $I_{\ttt''}(L'')$ is computed in $R''$  \cite[Lemma 1.19]{GorlaMixed}. One can check that  $\theta\left(I_\ttt(L)^{(n)}_{x_{k,c_\alpha}}\right) = I_{\ttt''}\left(L''\right)^{(n)}S$ for all $n \geq 1$ (see \cite[Lemma 10.1]{BookDet} for an analogous procedure). By the induction hypothesis, $\cR^s_{R''}(I_{\ttt''}(L''))$ is strongly $F$-regular, and therefore $\cR^s_{R''}(I_{\ttt''}(L'')) \otimes_{R''} S$ is strongly $F$-regular. Therefore we conclude via the  isomorphism $\theta$ that $\cR^s_R(I_{\ttt}(L)) \otimes_R R_{x_{k,c_\alpha}} \cong \cR^s_R(I_{\ttt}(L))_{x_{k,c_\alpha}}$ is strongly $F$-regular, finishing the proof.
\end{proof}

%%%%%%%%%
The rest of the this section is devoted to proving the following theorem which states  that unmixed determinantal ideals of two-sided ladders are Knutson ideals. Following \autoref{notation_affine_subsec}, for a ladder $L\subset X$ we denote by  $L_{[a,b]}=X_{[a,b]} \cap L$ the  subladder corresponding to the  consecutive columns from $a$ to $b$. Likewise,  $L^{[c,d]}=X^{[c,d]} \cap L$ denotes the subladder corresponding to the  consecutive rows from $c$ to $d$. 

\begin{theorem}\label{ThmLadderKI}
Assume \autoref{setup_mainsec_ladder}. For $t \in \ZZ_{>0}$  we let $I_t(L) \subseteq R=\mathbb{k}[L]$ be the corresponding unmixed ladder determinantal ideal. Let 
 $f$ be as in \autoref{eq_f} with $t_1=\cdots =t_v=t$. Then $$I_t\left(L_{[a,b]}\right) \in \mathcal{C}_f \qquad 
\text{ for every } \qquad 1\leq a < b\leq \ell \quad \text{ such that } \quad I_t\left(L_{[a,b]}\right)\neq 0,\qquad \text{and}$$   $$I_t\left(L^{[c,d]}\right) \in \mathcal{C}_f \qquad  
\text{ for every } \qquad 1\leq c < d\leq k \quad \text{ such that } \quad I_t\left(L^{[c,d]}\right)\neq 0.$$
In particular, then $ I_t(L)\in \mathcal{C}_f$ whenever $I_t(L)$ is not zero.
\end{theorem}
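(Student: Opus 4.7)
The plan is to argue inductively using the three operations defining $\mathcal{C}_f$. Since $f = \prod_{r \in \mathcal{B}} \det Y_r$ and each $\det Y_r$ is an irreducible polynomial (being the determinant of a generic square matrix), the minimal primes of $(f)$ are exactly the principal ideals $(\det Y_r)$. By the Remark following \autoref{K.I.} (closure under passage to minimal primes), this gives $(\det Y_r) \in \mathcal{C}_f$ for every $r \in \mathcal{B}$, which will be the base bricks of the construction.

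By the $X \leftrightarrow X^{T}$ symmetry, which exchanges $L_{[a,b]}$ with $L^{[c,d]}$ and preserves $f$ (since $\det M = \det M^T$), it suffices to establish $I_t(L_{[a,b]}) \in \mathcal{C}_f$; the final assertion of the theorem is then the case $[a,b]=[1,\ell]$. The overall strategy would be a double induction on the size $|L|$ of the ladder and on $b-a$. A key ingredient is Gorla's localization isomorphism \cite[Lemma 1.19]{GorlaMixed}, already exploited in the proof of \autoref{stronglyF}: inverting a boundary variable $x_{k,c_\alpha}$ identifies $I_t(L)$ with the extension of an ideal of the form $I_{\ttt''}(L'')$ on a strictly smaller ladder $L''$. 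Combined with the chamfering operation of \autoref{Decorn_Section}, this would allow a reduction to smaller instances of the problem while keeping track of the factors of $f$.

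For the local step, one would first establish $I_t(Y_r) \in \mathcal{C}_f$ for each square submatrix $Y_r \subseteq L$. Because $\det Y_r \in I_t(Y_r)$ by Laplace expansion, this amounts to recovering the full ideal of $t$-minors of $Y_r$ from the single polynomial $\det Y_r$ via Knutson operations; this can be done by an auxiliary induction on $\gamma_r - t$ using the classical colon identities for minors of a generic matrix that underlie Seccia's treatment of the case $L=X$ in \cite{seccia2022knutson}. Once this is in place, the sum $\sum_{r \,:\, Y_r \subseteq L_{[a,b]}} I_t(Y_r)$ already lies in $\mathcal{C}_f$ and captures the ``interior'' $t$-minors of $L_{[a,b]}$; the remaining $t$-minors, those straddling several $Y_r$'s, would be incorporated by applying the inductive hypothesis to appropriately chosen subladders and then forming further sums, intersections, and colons, passing at the end to a minimal prime to isolate $I_t(L_{[a,b]})$, which is prime by Gorla's work \cite{GorlaMixed}.

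The main obstacle is the control of these ``boundary'' $t$-minors: one must verify that every $t$-minor of $L_{[a,b]}$ indeed lies in an ideal that can be built from the $(\det Y_r)$ by the Knutson operations, and that the final minimal-prime extraction yields exactly the prime $I_t(L_{[a,b]})$ and not some strictly smaller or larger ideal. This requires a careful combinatorial analysis of the interplay between arbitrary $t$-minors of $L$ and the antidiagonal decomposition provided by the family $\{Y_r\}_{r \in \mathcal{B}}$, and this is the point at which the specific ladder geometry (encoded by \autoref{eq_inv1}--\autoref{eq_inv3}), as opposed to the generic rectangular situation handled in \cite{seccia2022knutson}, becomes essential.
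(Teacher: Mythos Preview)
Your outline has a structural problem that would prevent it from going through as written. The double induction on $|L|$ via Gorla's localization and chamfering is not compatible with the Knutson framework: the family $\mathcal{C}_f$ is tied to the \emph{specific} polynomial $f=f_L$ built from the ladder $L$, and chamfering to a smaller ladder $L''$ replaces $f$ by a different polynomial $f''$. Knowing $I_{\ttt''}(L'')\in\mathcal{C}_{f''}$ gives no direct information about membership in $\mathcal{C}_f$; localization and polynomial extension are not among the operations defining $\mathcal{C}_f$, so there is no mechanism to transport the conclusion back. This is quite different from the situation in \autoref{stronglyF}, where strong $F$-regularity does localize.

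A second issue is that the step ``establish $I_t(Y_r)\in\mathcal{C}_f$'' is both unclear and unnecessary. You would need, for a fixed square generic matrix $Y_r$, to recover all its $t$-minors from the single determinant $\det Y_r$ using only sums, intersections, and colons \emph{inside $\kk[L]$}; the colon identities from \cite{seccia2022knutson} do not obviously supply this, and you acknowledge that the subsequent handling of ``boundary'' minors is the unresolved part.

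The paper's argument avoids both difficulties with a much simpler induction, on the width $b-a$ alone. The base case (\autoref{lemmaLadderKI}) treats subladders with exactly $t$ columns: here $I_t(L_{[j,j+t-1]})$ is a prime containing the complete intersection $H=(\det Y_r:r\in\mathcal{E})$ of the same height, hence is a minimal prime of $H\in\mathcal{C}_f$. For the inductive step one uses the identity
\[
I_t(L_{[j,j+t-1+\delta]})+I_t(L_{[j+1,j+t+\delta]})=I_t(L_{[j,j+t+\delta]})\cap I_{t-1}(L_{[j+1,j+t-1+\delta]}),
\]
obtained by contracting the corresponding identity on the full matrix from \cite{seccia2022knutson} down to $\kk[L'\cup L'']$. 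The left side is in $\mathcal{C}_f$ by induction, and $I_t(L_{[j,j+t+\delta]})$ is one of its minimal primes. No induction on $|L|$, no localization, and no need to produce $I_t(Y_r)$ itself.
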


 A first step towards the proof of  \autoref{ThmLadderKI} is the following lemma where we show that the ideal generated by the $t$-minors of a subladder corresponding to $t$ consecutive columns or rows is in $\mathcal{C}_f$.
\begin{lemma}\label{lemmaLadderKI}
Under the assumptions and notations of \autoref{ThmLadderKI} we have
$$
 I_t\left(L_{[j,j+t-1]}\right) \in \mathcal{C}_f \qquad  \text{ for every } \,\,\,\, 1\ls j\ls  \ell-t+1 \quad \text{ such that } \quad I_t\left(L_{[j,j+t-1]}\right)\neq 0,\qquad \text{and}
 $$
 $$
 I_t\left(L^{[i,i+t-1]}\right) \in \mathcal{C}_f \qquad\text{ for every } \qquad 1\ls i\ls k-t+1 \quad \text{ such that } \quad I_t\left(L^{[i,i+t-1]}\right)\neq 0.
 $$
\end{lemma}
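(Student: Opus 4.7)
The plan is to prove the column statement; the row statement will then follow from the symmetry of the whole setup under the transposition $x_{i,j}\leftrightarrow x_{j,i}$, which exchanges row-strips with column-strips and, up to a relabeling of variables, preserves the polynomial $f$ and hence the Knutson closure $\mathcal{C}_f$.

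Fix $j$ with $I_t(L_{[j,j+t-1]})\neq 0$. For every $r\in\mathcal{B}$ the ideal $(\det(Y_r))$ is a minimal prime of the principal ideal $(f)\in\mathcal{C}_f$, so $(\det(Y_r))\in\mathcal{C}_f$ by property $(2')$ of the remark following \autoref{K.I.}. I introduce
\[
S_j=\bigl\{\,r\in\mathcal{B}\ :\ a_r\le r-j-t+1\ \text{ and }\ b_r\ge r-j\,\bigr\},
\]
i.e.\ the set of antidiagonal levels whose associated square submatrix $Y_r$ has column range $[r-b_r,r-a_r]$ containing the target strip $[j,j+t-1]$; note that any such $r$ automatically satisfies $\gamma_r\ge t$. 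I then set
\[
J_j=\sum_{r\in S_j}(\det(Y_r))\ \in\ \mathcal{C}_f,
\]
and the aim is to show that $I_t(L_{[j,j+t-1]})$ is a minimal prime of $J_j$, which by property $(2')$ again will give $I_t(L_{[j,j+t-1]})\in\mathcal{C}_f$.

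The first step is the containment $J_j\subseteq I_t(L_{[j,j+t-1]})$: for each $r\in S_j$, the Laplace expansion of $\det(Y_r)$ along the block of columns indexed by $[j,j+t-1]$ writes it as a $\kk$-linear combination of products $m\cdot m'$, where $m$ is a $t$-minor of $Y_r$ with column indices exactly $[j,j+t-1]$ (hence a generator of $I_t(L_{[j,j+t-1]})$) and $m'$ is the complementary minor of $Y_r$. The second, and main, step is to check that $I_t(L_{[j,j+t-1]})$ is actually a \emph{minimal} prime of $J_j$. Since this ideal is prime by \cite{GorlaMixed}, it is enough to show $\Ht(J_j)=\Ht(I_t(L_{[j,j+t-1]}))$: equality forces minimality, because any prime $Q$ with $J_j\subseteq Q\subsetneq I_t(L_{[j,j+t-1]})$ would satisfy $\Ht(Q)<\Ht(I_t(L_{[j,j+t-1]}))=\Ht(J_j)\le\Ht(Q)$, a contradiction. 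For the lower bound on $\Ht(J_j)$ I would pass to the antidiagonal initial ideal: since $J_j\in\mathcal{C}_f$, Knutson ideal theory yields $\IN_\prec(J_j)=\sum_{r\in S_j}\IN_\prec(\det(Y_r))$, a squarefree monomial ideal generated by the antidiagonal products along $[a_r,b_r]$ for $r\in S_j$, whose height can then be computed combinatorially and compared with Gorla's formula \cite[Theorem 1.15]{GorlaMixed} for $\Ht(I_t(L_{[j,j+t-1]}))$.

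The hard part will be the explicit combinatorial matching in the second step: verifying that the minimum vertex cover of the hypergraph associated to the antidiagonal monomials $\prod_{i=a_r}^{b_r}x_{i,r-i}$ for $r\in S_j$ realises precisely the Gorla invariant of $I_t(L_{[j,j+t-1]})$. If this bookkeeping turns out delicate, a natural fallback is to localize at a suitable product of variables outside the strip in order to flatten the ladder to a generic rectangular matrix and invoke Seccia's theorem for generic matrices \cite{seccia2022knutson}, in the spirit of the isomorphism from \cite[Lemma 1.19]{GorlaMixed} used in the proof of \autoref{stronglyF}.
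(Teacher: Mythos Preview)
Your strategy is the same as the paper's: pick a subset of the factors $\det(Y_r)$ of $f$, form the ideal they generate, and show that $I_t(L_{[j,j+t-1]})$ is a minimal prime of it. The paper's set $\mathcal E=\{r:\lvert\mathcal D_r\cap L'\rvert=t\}$ and your set $S_j$ describe the same antidiagonal levels (those for which the column range of $Y_r$ contains $[j,j+t-1]$), and your Laplace--expansion argument for the containment $J_j\subseteq I_t(L')$ is exactly what is implicit in the paper's line $H\subseteq I_t(L')$.

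Where you diverge is in the height computation, and here you are making life much harder than necessary. The antidiagonal initial terms $\IN_\prec(\det(Y_r))=\prod_{i=a_r}^{b_r}x_{i,r-i}$ are supported on \emph{distinct} antidiagonals $\mathcal D_r$, so they share no variables. Hence the generators of $J_j$ have pairwise coprime leading monomials, they form a Gr\"obner basis by Buchberger's criterion, and $J_j$ is a complete intersection with $\Ht(J_j)=|S_j|$. There is no nontrivial vertex--cover computation: the hypergraph you describe has pairwise disjoint edges. This is precisely the simplification the paper uses (``$H$ is a complete intersection with $\Ht(H)=|\mathcal E|$'').

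What remains is the equality $|S_j|=\Ht\bigl(I_t(L')\bigr)$, and here the paper gives a clean bijective argument rather than an inequality chase: each $r\in\mathcal E$ contributes exactly one point of the main antidiagonal of $Y_r$ to $(L')^\circ_t$, and every point of $(L')^\circ_t$ lies on a unique such antidiagonal; thus $|\mathcal E|=|(L')^\circ_t|$, which equals $\Ht(I_t(L'))$ by Gorla's formula \cite[Theorem~1.15]{GorlaMixed}. With this in hand your proposed fallback via localization is unnecessary.
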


\begin{proof}
Fix $1\ls j\ls \ell-t+1$ such that $I_t\left(L_{[j,j+t-1]}\right)\neq 0$. Set $L'=L_{[j,j+t-1]}$ and  $\mathcal{E}=\{2 \leq r \leq k+\ell \mid \left|\mathcal{D}_{r}\cap L'\right|=t \}$. %. By construction of $L'$, $\mathcal{A}^{\prime}=\{2 \leq r^{\prime} \leq k+l \mid |D_r\cap \widetilde{L}|=t \}$. 
Notice that $\mathcal{E} \neq 0$, and for every $r \in \mathcal{E}$ there is only  one element on the main antidiagonal of $Y_{r}$ that belongs to $(L')^\circ_t$ (see sentence after \autoref{eq_inv3}). Then by  \cite[Theorem 1.15]{GorlaMixed} it follows that $\Ht \left(I_t(L')\right)=|(L)^\circ_t| = | \mathcal{E}|$. Moreover
\begin{equation*}\label{eq_dets_in}
	H:=\left\langle \left\{\det(Y_{r}) \mid r \in \mathcal{E}\right\}\right\rangle \subseteq I_t(L')  . 
\end{equation*}
%Define $H$ to be the ideal on the right hand side of \autoref{eq_dets_in} and observe %. Let $\prec$ be a an antidiagonal term order and observe that the initial ideal of $H$ is given by the product of the elements in some of the main antidiagonals of $L'$. Since these monomials are coprime, this ideal 
Notice $H$ is a complete intersection with $\Ht (H)=|\mathcal{E}|$ and $H\in \mathcal{C}_f$ as it is generated by some of the factors of $f$. Thus $I_t(L')$ is a minimal prime of $H$ and so $I_t(L')\in \mathcal{C}_f$. 

Likewise, one can prove that $I_t\left(L^{[i,i+t-1]}\right)$ is either zero or in $ \mathcal{C}_f$ for every $i$.
\end{proof}

With \autoref{lemmaLadderKI} in hand we can  prove  \autoref{ThmLadderKI}.

\begin{proof}[Proof of \autoref{ThmLadderKI}]
%Let $t$ be such that $\Ht \left(I_t(L)\right)>0$. % We want to prove that $I_t(L) \in \mathcal{C}_f$. 
We will only show that $I_t\left(L_{[a,b]}\right) \in \mathcal{C}_f$ whenever $I_t\left(L_{[a,b]}\right) \neq 0$, as the proof for $I_t\left(L^{[c,d]}\right) $ is similar.

We proceed by induction on $\delta\in \NN$ to show that $I_t\left(L_{[j,j+t-1+\delta]}\right)$ is either 0 or in $\mathcal{C}_f$  for every $1\ls j\ls \ell-t+1-\delta$, the base case $\delta=0$ being 
\autoref{lemmaLadderKI}. %Notice that since $I_t\left(L_{[a,b]}\right) \neq 0$, then there always exists an index $j$ such that $0 \neq I_t\left(L_{[j,j+t-1]}\right)\in \mathcal{C}_f$.

Fix $1\ls j\ls \ell-t-\delta$. By  the induction hypothesis  the ideals $I_t\left(L_{[j,j+t-1+\delta]}\right)$ and $I_t\left(L_{[j+1,j+t+\delta]}\right)$ are either zero or in $\mathcal{C}_f$. Therefore so is their  sum. Assume that the sum is not zero.  

We claim that 
\begin{equation}\label{eq_first_int}
I_t\left(L_{[j,j+t-1+\delta]}\right)+I_t\left(L_{[j+1,j+t+\delta]}\right)=I_t\left(L_{[j,j+t+\delta]}\right) \cap I_{t-1}\left(L_{[j+1,j+t-1+\delta]}\right),
\end{equation}
if  $t>1$, %and both the summands are not zero, 
 and  
%$I_{t-1}\left(L_{[j+1,j+t-1+\delta]}\right)\neq (0)$, and
\begin{equation}\label{eq_second_int}
	I_t\left(L_{[j,j+t-1+\delta]}\right)+I_t\left(L_{[j+1,j+t+\delta]}\right)=I_t\left(L_{[j,j+t+\delta]}\right),
\end{equation}
otherwise.

Since \autoref{eq_second_int} is clear, we focus on proving \autoref{eq_first_int}. So, we may assume  $t>1$.  % and   $I_{t-1}\left(L_{[j+1,j+t-1+\delta]}\right)\neq (0).$ 
To simplify the notation we set 
$$L':=L_{[j,j+t-1+\delta]}\qquad \text{and} \qquad L'':=L_{[j+1,j+t+\delta]}.$$ 
% Thus, we want to prove that $$I_t(L_1)+I_t(L_2)=I_t(L_1 \cup L_2) \cap I_{t-1}(L_1 \cap L_2).$$
 From the proof of \cite[Theorem~2.1]{seccia2022knutson} it follows that $$I_t\left(X_{[j,j+t-1+\delta]}\right)+I_t\left(X_{[j+1,j+t+\delta]}\right)=I_t\left(X_{[j,j+t+\delta]}\right) \cap I_{t-1}\left(X_{[j+1,j+t-1+\delta]}\right).$$
 Thus after contracting these ideals to $\mathbb{k}[L' \cup L'']$ we obtain
$$
  \left( I_t\left(X_{[j,j+t-1+\delta]}\right)+I_t\left(X_{[j+1,j+t+\delta]}\right)\right) \cap \mathbb{k}[L' \cup L'']=I_t(L') + I_{t}(L''),\qquad\text{and}$$
  $$
  \left(I_t\left(X_{[j,j+t+\delta]}\right) \cap I_{t-1}\left(X_{[j+1,j+t-1+\delta]}\right)\right)\cap \mathbb{k}[L' \cup L'']= I_t(L' \cup L'') \cap I_{t-1}(L' \cap L''),
$$
where the first equality follows from \cite[Lemma 2.2]{seccia2022knutson}, \cite[Corollary 2]{knutson2009frobenius}, and  \cite[Lemma 1.8]{GorlaMixed}, whereas the second one is clear.  
 %$I_t(X_{[i,i+t-1]})$ and $I_t(X_{[i+1,i+t]})$ are Knutson ideals.
 This proves the claim. 
 
 Hence the ideal
%
%
%We point out that the intersection $I_t(L' \cup L'') \cap I_{t-1}(L' \cap L')$ could be redundant in some cases. If this happens, we have $I_t(L' \cup L'')=I_t(L')+I_t(L'') \in \mathcal{C}_f$. Otherwise, 
$I_t(L_{[j,j+t+\delta]})=I_t(L' \cup L'')$ is a minimal prime of $I_t(L')+I_t(L'') \in \mathcal{C}_f$. We conclude  that $I_t(L_{[j,j+t+\delta]})$ is either 0 or in $\mathcal{C}_f$ for every $1\ls j\ls  \ell-t$, finishing the proof.  
\end{proof}

%%%%%%%%%%%%%%%%%%%%%%%%%%%%%%%%%%%%%%%%%%%%%%%%%%%%%%%%%
\section{Knutson  Schubert determinantal and poset ideals}\label{SectionSchubertMSV}
%%%%%%%%%%%%%%%%%%%%%%%%%%%%%%%%%%%%%%%%%%%%%%%%%%%%%%%%%

In this section, we show that Schubert determinantal ideals and poset determinantal ideals are Knutson ideals. We remark that the statement for  Schubert determinantal ideals  already appeared in Knutson's work % but our proof uses different methods
 \cite{knutson2009frobenius}. %We begin by recalling the following defintions. 

Throughout this section we follow the notations introduced in \autoref{sub_affine_schub}. In particular, $X=(x_{i,j})$ is a generic matrix of size $k \times \ell$. % with $k\ls \ell$
We also assume the following setup.

\begin{setup}\label{setup_Knutson_section}
	We adopt  \autoref{notation_affine_subsec}. % and   denote by 
	%	Let $X=(x_{i,j})$ be a generic matrix of size $k \times \ell$. 
	Let $R=\kk[X]$ be the polynomial ring in the variables in $X$, where $\kk$ is an arbitrary field. Moreover, we equip $R$ with an antidiagonal term order.
\end{setup}

We continue with the definition of the following polynomial which plays an essential role in the results in this section. This polynomial is inspired by the results by the fourth-named author \cite{seccia2022knutson}.
\begin{definition}\label{def_f(X)}
	Assume  \autoref{setup_Knutson_section}. We define $f(X) \in \kk[X]$ as: %to be the polynomial given by the product of all the minors of $X$ corresponding to its antidiagonals, that is 
	$$f(X)= \prod_{j=0}^{k-2} \left[\det \left(X_{[1,j+1]}^{[1,j+1]}\right) 
	\det \left(X^{[k-j,k]}_{[\ell-j,\ell]}\right) \right] 
	\prod_{j=1}^{\ell-k+1} \left(\det \left(X_{[j,k+j-1]}\right) \right).$$ 
	We note that: 
	$$\init (f(X))= \prod_{\substack{1\ls i\ls k \\ 1\ls j\ls \ell}} x_{i,j}.$$
	%Therefore, we can construct the Knutson family of ideals associated with this $f$. This choice of $f$ allows us to use some known results from \cite. In particular, we will use the fact that given a generic matrix, all determinantal ideals on adjacent columns (or adjacent rows) are Knutson ideals for this (standard) choice of $f$ (see \cite[Theorem 2.1] for more details).
\end{definition}

The following theorem shows that determinantal ideals of every northwest and southeast submatrix of $X$ is Knutson associated to $f(X)$. This allows us to show Schubert determinantal and poset ideals are Knutson.

\begin{theorem}\label{p:msk}
	Assume \autoref{setup_Knutson_section} and  let $f:=f(X) $ be as in \autoref{def_f(X)}.  Then for every $1\ls r \ls k$, $1\ls s\ls \ell$, and $t\ls \min\{r,s\}$, we have $I_t(X_{r\times s})\in \mathcal{C}_f$ and $I_t(X^{r\times s}) \in \mathcal{C}_f$.
\end{theorem}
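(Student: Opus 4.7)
The plan is to induct, exploiting the closure of $\mathcal{C}_{f(X)}$ under sums, intersections, colons, and minimal primes. Observing that the involution of $R$ sending $x_{i,j}\mapsto x_{k-i+1,\ell-j+1}$ fixes $f(X)$ and exchanges northwest corners with southeast corners, I only need to prove the statement for $I_t(X_{r\times s})$.

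For the base cases: when $r=k$ or $s=\ell$, the corner $X_{r\times s}$ is a full column or row strip of $X$, and I would combine \autoref{ThmLadderKI} (applied to $L=X$) with a combinatorial check that the polynomial produced by \autoref{eq_f} with $L=X$ and $\ttt=(t)$, call it $f_t$, divides $f(X)$. The latter holds because every antidiagonal square submatrix $Y_r$ entering $f_t$ is a top-left corner minor, a bottom-right corner minor, or a sliding $k\times k$ minor, each being a factor of $f(X)$ as defined in \autoref{def_f(X)}. This yields $\mathcal{C}_{f_t}\subseteq \mathcal{C}_{f(X)}$, so $I_t(X_{[1,s]}),I_t(X^{[1,r]})\in\mathcal{C}_{f(X)}$. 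The second base case is $r=s=t$, where $I_t(X_{t\times t})$ is generated by $\det X_{[1,t]}^{[1,t]}$, itself a factor of $f(X)$.

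For the inductive step with $r<k$, $s<\ell$, and $\min(r,s)>t$, I would induct primarily on the codimension $(k-r)+(\ell-s)$ and secondarily on $t$, so that the ideals $I_t(X_{r'\times s'})$ for $r'\ge r$, $s'\ge s$, $(r',s')\ne (r,s)$, together with $I_{t-1}(X_{r\times s})$, are all available in $\mathcal{C}_{f(X)}$. The goal is to find a two-dimensional analogue of the identity
\[
I_t(L_{[j,j+t-1+\delta]})+I_t(L_{[j+1,j+t+\delta]})=I_t(L_{[j,j+t+\delta]})\cap I_{t-1}(L_{[j+1,j+t-1+\delta]})
\]
used in the proof of \autoref{ThmLadderKI}, which would exhibit $I_t(X_{r\times s})$ as a minimal prime of a Knutson ideal built from the inductively-known pieces and an appropriate factor of $f(X)$. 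A toy computation with $k=\ell=3$, $t=1$, $r=s=2$ is suggestive: adding the factor $\det(X_{[1,3]}^{[1,3]})$ of $f(X)$ to the Knutson ideal $I_1(X_{1\times 2})+I_1(X_{2\times 1})$ produces an ideal whose minimal primes include $I_1(X_{2\times 2})$. I would try to generalize this by choosing an appropriate top-left, bottom-right, or sliding $k\times k$ minor of $f(X)$ according to the location of $(r,s)$ in $X$.

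The hard part will be producing this identity in general. Naive two-dimensional analogues of the ladder identity, such as $I_t(X_{(r+1)\times s})+I_t(X_{r\times(s+1)})=I_t(X_{(r+1)\times(s+1)})\cap I_{t-1}(X_{r\times s})$, have $I_t(X_{r\times s})$ strictly contained in both minimal primes of the right-hand side, so additional operations---most likely the addition of a carefully chosen minor from $f(X)$ followed by extraction of minimal primes---will be needed. Verifying that $I_t(X_{r\times s})$ appears as a minimal prime of the resulting ideal, uniformly in $(r,s,t)$, is where I expect the technical difficulty to concentrate.
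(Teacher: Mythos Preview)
Your symmetry reduction and your handling of the strip cases $r=k$ or $s=\ell$ are fine, and your toy computation is exactly right: the ideal $(x_{11},x_{12},x_{21},\det X)$ does have $I_1(X_{2\times 2})$ among its minimal primes. The problem is that this toy computation is inconsistent with your stated induction scheme. You say you will induct on $(k-r)+(\ell-s)$ so that the ideals $I_t(X_{r'\times s'})$ with $r'\ge r$, $s'\ge s$ are available; but in the toy you use $I_1(X_{1\times 2})$ and $I_1(X_{2\times 1})$, which have \emph{strictly smaller} $r'$ or $s'$, hence \emph{larger} codimension, and are not furnished by your hypothesis (nor by your stated base cases $r=k$, $s=\ell$, or $r=s=t$). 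More fundamentally, the operations of $\mathcal{C}_f$ cannot shrink a prime: from $I_t$ of larger corners and $I_{t-1}(X_{r\times s})$, all of which \emph{contain} $I_t(X_{r\times s})$, you will never recover $I_t(X_{r\times s})$ as a minimal prime of any sum, intersection, or colon you form.

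What your toy example is really telling you is that the correct induction parameter is $\delta=\min(r,s)-t$, and the correct auxiliary ideal is
\[
D_2 \;=\; I_t\bigl(X_{(r-1)\times s}\bigr)\;+\;I_t\bigl(X_{r\times(s-1)}\bigr)\;+\;I_{t+2}\bigl(X_{(r+1)\times(s+1)}\bigr),
\]
which for $(k,\ell,r,s,t)=(3,3,2,2,1)$ is precisely your $(x_{11},x_{12},x_{21},\det X)$. Each summand has $\delta$ decreased by one (the third because both $\min$ and $t$ go up, the latter by two), so they are inductively in $\mathcal{C}_f$; the first two summands generate a ladder determinantal prime of height $(r-t+1)(s-t+1)-1$ not containing the third summand, so $D_2$ has height at least $(r-t+1)(s-t+1)=\Ht(I_t(X_{r\times s}))$, and since $D_2\subseteq I_t(X_{r\times s})$ the latter is a minimal prime. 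This is exactly the paper's argument. The base case $\delta=0$ is handled not by the single minor $\det X_{t\times t}$ but by the complete intersection of consecutive factors of $f$ lying in $I_m(X_{r\times s})$, whose number matches $\Ht(I_m(X_{r\times s}))=\max(r,s)-\min(r,s)+1$. Once you reorganize the induction this way, the ``hard part'' you anticipated disappears: no further identity is needed beyond the height count for $D_2$.
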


\begin{proof}
	%We start by noticing that $I_{\omega}$ can be written as a finite sum of ideals of the form $I_t(X_{p\times q})$ where $t \leq \MIN (k,l)$, $p \leq k$ and $q \leq l$. Thus, if we prove that each of these summands is a Knutson ideal associated with $f$,  so it is $I_{\omega}$ by definition. 
	%For simplicity, we prove the result only in the case $k=\ell$, but the same proof works for the general case. 
	If $r=k$, or $s=\ell$, then $I_t(X_{r\times s}) \in \mathcal{C}_f$ \cite[Lemma 2.2]{seccia2022knutson}. Hence, we can assume   $r\neq k$ and $s \neq \ell$. We set $m:=\min\{r,s\}$ and $M:=\max\{r,s\}$.  %We can also assume without loss of generality that $r \leq s$. %(the case $q \leq p$ is symmetric). 
	We proceed by induction on $\delta:=m-t$.
	
	{\bf Base case:} Assume $\delta=0$. We define, 
	$$D_1:=\left( \deta{X_{m \times m}}, \ldots,\deta{X_{M \times M}}\right)\text{ if }M\ls k, \text{and}$$
	$$D_1:=\left( \deta{X_{m \times m}}, \ldots,\deta{X_{k \times k}}, \deta{X_{[2,k+1]}},\ldots, \deta{X_{[M-k+1,M]}} \right) \text{ otherwise}.$$
	%If $n:=\min\{k,M}$, then 
%$$I_m(X_{r \times s}) \supseteq D_1:=\left( \deta{X_{m \times m}},\deta{X_{m+1 \times m+1}}, \ldots,\deta{X_{n \times n}}, \deta{X_{[2,k+1]}},\cdots, \deta{X_{[M-k+1,M]}} \right).$$
The ideal $D_1$ is a complete intersection of  height  $M-m+1=\height\left(I_m(X_{r \times s})\right)$ and it  is contained in $I_m(X_{r \times s})$, therefore $I_m(X_{r \times s}) \in \MIN( D_1).$  
Moreover, $D_1 \in \mathcal{C}_f$ because it is generated by factors of $f$ and so it is a sum of minimal primes of $(f)$.  Thus,  $I_m(X_{r \times s})\in \mathcal{C}_f$ for every $r$ and $s$.

{\bf Induction step:}  Assume $\delta\gs 1$. We have the inclusion
\begin{equation}\label{cont1}
	I_{t}(X_{r \times s}) \supseteq D_2:=I_{t}(X_{r-1 \times s}) + I_{t}(X_{r \times s-1}) +I_{t+2}(X_{r+1 \times s+1}).
\end{equation}
The ideal  $J:= I_{t}(X_{r-1 \times s}) + I_{t}(X_{r \times s-1})$ is  ladder determinantal  (see  \autoref{sub_ladder}), so it is prime of height $(r-t+1)(s-t+1)-1$ \cite{GorlaMixed}. Since $J$ does not contain $I_{t+2}(X_{r+1 \times s+1})$, we obtain that 
$$(r-t+1)(s-t+1)=\height\left(I_{t}(X_{r \times s})\right)\gs\height(D_2)\gs \height(J)+1\gs (r-t+1)(s-t+1),$$ which implies 
\begin{equation}\label{inMin}
	I_{t}(X_{r \times s}) \in \MIN ( D_2).
\end{equation}
Now, assume $s\gs r$, i.e., $m=r$. By the $\delta -1$ case of the induction, $I_{t}(X_{r-1 \times s})$ and $I_{t+2}(X_{r+1 \times s+1})$ belong to $\mathcal{C}_f$. 
If $r=s$, we have $I_{t}(X_{r \times s-1})\in \mathcal{C}_f$ again  by   the $\delta -1$ case of the induction, and then, $I_{t}(X_{r \times s})\in \mathcal{C}_f$ by \eqref{cont1} and \eqref{inMin}. Now,  using \eqref{cont1} and \eqref{inMin} repeatedly we conclude  $I_{t}(X_{r \times s}) \in \mathcal{C}_f$ for any $s\gs r$. The case $r\gs s$ follows similarly by reversing the roles of $r$ and $s$.  Thus, $I_t(X_{r\times s}) \in \mathcal{C}_f$ as we wanted to show.

Likewise one shows that $I_t(X^{r\times s}) \in \mathcal{C}_f$,  finishing the proof.
\end{proof}

%To do so, we first need to specify the polynomial $f$ defining the family of Knutson ideals we are considering. 

%\begin{definition}
%\textcolor{blue}{Given a permutation $\omega \in M_{kl}$, its Schubert determinantal ideal $I_{\omega}$ can be written a sum of upper-left submatrices of the generic matrix of size $k \times l$. This suggests to choose $f$ as in the case of determinantal ideals of generic matrices (see \cite[Theorem 2.1]), as explained  below. }
%\end{definition}

As a consequence, we conclude  that Schubert determinantal ideals are Knutson ideals for the same choice of $f$, which recovers a result by Knutson \cite{knutson2009frobenius}.

\begin{corollary}\label{c:msk}
Assume \autoref{setup_Knutson_section} and  let $f:=f(X) $ be as in \autoref{def_f(X)}. Let $w \in \mathscr{M}_{k\times \ell}(\mathbb{\kk})$ be a partial  permutation and let $I_{w}$ be its corresponding Schubert determinantal ideal. Then $I_{w} \in \mathcal{C}_f$. In particular,  
$\init_\prec(I_{w})$ is a squarefree monomial ideal for any antidiagonal term order $\prec$ and 
Schubert determinantal  ideals define $F$-pure rings in positive characteristic.
\end{corollary}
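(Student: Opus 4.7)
The plan is to express $I_w$ as a sum of ideals already known to lie in $\mathcal{C}_f$, and then invoke closure under sums. By \autoref{def_MSV}, one has the presentation
\[
I_w = \sum_{1 \leq r \leq k,\, 1 \leq s \leq \ell} I_{\rank(w_{r\times s})+1}\bigl(X_{r\times s}\bigr),
\]
so $I_w$ is a finite sum of ideals of fixed-size minors of northwest corner submatrices of $X$.

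The first step is to apply \autoref{p:msk} to each summand, which immediately gives $I_{\rank(w_{r\times s})+1}(X_{r\times s}) \in \mathcal{C}_f$. Since $\mathcal{C}_f$ is closed under sums by \autoref{K.I.}(3), the conclusion $I_w \in \mathcal{C}_f$ follows at once.

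For the ``in particular'' part, I would invoke the general properties of Knutson ideals recorded in \autoref{SubSecKnutson}: any member of $\mathcal{C}_f$ has a squarefree initial ideal with respect to the fixed antidiagonal term order (this works for \emph{any} antidiagonal order, since the leading term of $f(X)$ is $\prod_{i,j} x_{i,j}$ regardless of the specific antidiagonal choice), and in positive characteristic the members of $\mathcal{C}_f$ are compatibly split with the Frobenius splitting defined by $f$, provided that $\deg(f(X))$ equals the number of variables $k\ell$. A quick degree count from \autoref{def_f(X)} confirms this: the first product contributes $\sum_{j=0}^{k-2} 2(j+1) = k(k-1)$, and the second contributes $(\ell-k+1)k$, summing to $k\ell$. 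Being $F$-split implies being $F$-pure, so the positive characteristic claim follows.

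There is essentially no obstacle here once \autoref{p:msk} is in place; the entire argument reduces to a direct application of the closure of $\mathcal{C}_f$ under sums, together with the standard properties of Knutson ideals already collected in \autoref{SubSecKnutson}. The only minor verification worth performing explicitly is the degree computation for $f(X)$, which justifies that $f$ defines a Frobenius splitting of $\kk[X]$ in positive characteristic.
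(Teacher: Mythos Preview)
Your proof is correct and takes essentially the same approach as the paper: write $I_w$ as a finite sum of ideals $I_t(X_{r\times s})$, apply \autoref{p:msk} to each summand, and use closure of $\mathcal{C}_f$ under sums. The paper's own proof is just those two sentences; your additional explicit degree count for $f(X)$ is a nice verification but is already implicit in \autoref{def_f(X)}, where the initial term $\prod_{i,j} x_{i,j}$ is recorded.
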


\begin{proof}
Notice that $I_{w}$ can be written as a finite sum of ideals of the form $I_t(X_{r\times s})$. Thus, % where $t \leq \min(k,\ell)$, $r \leq k$ and $s \leq \ell$. 
the result follows from  \autoref{p:msk}. % each of these summands is a Knutson ideal associated with $f$,  so $I_{\omega} \in \mathcal{C}_f$ by definition.
\end{proof}

\begin{remark}
{\it Alternating sign matrix varieties}  and their defining ideals are generalizations of matrix Schubert varieties (see  \cite{klein2021bumpless} for more details).  \autoref{p:msk} applies to these ideals to show that  they belong to  $\mathcal{C}_f$. In particular, their initial ideals with respect to any antidiagonal term order are squarefree and alternating sign matrix ideals define $F$-pure rings in positive characteristic.
\end{remark}

The next result about poset ideals generalizes \cite[Remark 79]{BSV} from maximal minors to minors of any size.

\begin{corollary}\label{c:pif}
Assume \autoref{setup_Knutson_section} and  let $f:=f(X) $ be as in \autoref{def_f(X)}. For any  ideal $\Omega\subseteq \Pi$, the poset ideal $\Omega\kk[X]$  is in $\C_{f}$. In particular, $\init_\prec(\Omega \kk[X])$ is a squarefree monomial ideal for any antidiagonal term order $\prec$ and $\kk[X]/\Omega \kk[X]$ is $F$-pure  in positive characteristic.
\end{corollary}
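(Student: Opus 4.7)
The plan is to reduce to \autoref{p:msk} by expressing every poset ideal $\Omega\kk[X]$ as a finite intersection of finite sums of ideals of minors of top-left corner submatrices of $X$, and then invoking the closure of $\mathcal{C}_f$ under sums and intersections. The ``in particular'' claims will then follow immediately from the general properties of Knutson ideals (squarefree initial ideal and $F$-purity in positive characteristic) recalled in \autoref{SubSecKnutson}.

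For the first reduction, I would write $\Omega$ as a finite intersection of the sets $\Omega_\delta=\{\pi\in\Pi:\pi\not\geq\delta\}$ introduced at the end of \autoref{sub_affine_schub}. Since $\Omega$ is downward closed and $\Pi$ is finite, the set-theoretic identity $\Omega=\bigcap_{\delta\in\Pi\setminus\Omega}\Omega_\delta$ is a direct check: the inclusion $\subseteq$ is the contrapositive of downward closedness (if $\pi\in\Omega$ and $\pi\geq\delta$ then $\delta\in\Omega$), and $\supseteq$ is seen by taking $\delta=\pi$ for each $\pi\notin\Omega$. To lift this to an identity of ideals $\Omega\kk[X]=\bigcap_{\delta\notin\Omega}\Omega_\delta\kk[X]$ I would use the ASL structure of $\kk[X]$ on $\Pi$: since standard monomials form a $\kk$-basis of $\kk[X]$ and the straightening of any non-standard product $\pi\pi'$ only produces standard monomials whose smallest factor is strictly below both $\pi$ and $\pi'$, for any poset ideal $\Omega'\subseteq\Pi$ the ideal $\Omega'\kk[X]$ has as a $\kk$-basis the standard monomials whose smallest factor lies in $\Omega'$; intersecting bases then yields $(\Omega_1\cap\Omega_2)\kk[X]=\Omega_1\kk[X]\cap\Omega_2\kk[X]$ for any poset ideals $\Omega_1,\Omega_2$.

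For the second reduction, given $\delta=[c_1\cdots c_s\mid d_1\cdots d_s]$, I would establish
\begin{equation*}
\Omega_\delta\kk[X] \;=\; I_{s+1}(X)\;+\;\sum_{i=1}^{s}\bigl(I_i(X_{(c_i-1)\times\ell})+I_i(X_{k\times(d_i-1)})\bigr).
\end{equation*}
The inclusion $\supseteq$ is immediate since each natural generator of each summand is a minor failing to satisfy $\geq\delta$, either because its size exceeds $s$ or because one of its row (resp.\ column) indices is less than the corresponding $c_i$ (resp.\ $d_i$). For $\subseteq$, take a generator $\pi=[a_1\cdots a_u\mid b_1\cdots b_u]\in\Omega_\delta$: if $u>s$ then Laplace expansion places $\pi\in I_{s+1}(X)$; otherwise $\pi\not\geq\delta$ forces some $i\leq u$ with $a_i<c_i$ or $b_i<d_i$, and Laplace expansion of $\pi$ along its first $i$ rows (respectively, columns) writes $\pi$ as a combination of $i$-minors whose row (resp.\ column) indices lie in $[1,c_i-1]$ (resp.\ $[1,d_i-1]$), placing $\pi$ in $I_i(X_{(c_i-1)\times\ell})$ (resp.\ $I_i(X_{k\times(d_i-1)})$).

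Combining the two steps: by \autoref{p:msk} every $I_t(X_{r\times s'})$ belongs to $\mathcal{C}_f$, so closure of $\mathcal{C}_f$ under finite sums gives $\Omega_\delta\kk[X]\in\mathcal{C}_f$, and closure under finite intersections then yields $\Omega\kk[X]\in\mathcal{C}_f$. The main technical obstacle is the ASL intersection identity of the first step — this is where the compatibility between the poset structure on $\Pi$ and the algebra structure on $\kk[X]$ is genuinely used; once it is established, the remainder of the argument is essentially bookkeeping combined with Laplace expansion.
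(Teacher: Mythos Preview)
Your proposal is correct and follows essentially the same approach as the paper: decompose $\Omega$ as an intersection of the $\Omega_\delta$, pass to ideals via the ASL structure (the paper simply cites \cite[Proposition 5.2]{BookDet} for this), write each $\Omega_\delta\kk[X]$ as the same sum of determinantal ideals of top-left submatrices, and invoke \autoref{p:msk} together with closure of $\mathcal{C}_f$ under sums and intersections. The only cosmetic differences are that the paper intersects over the minimal elements of $\Pi\setminus\Omega$ rather than all of it, and cites the intersection identity rather than re-deriving it from the standard monomial basis.
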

\begin{proof}
Given $\delta\in\Pi$ we set $\Omega_{\delta}=\{\pi\in\Pi:\pi\not\geq \delta\}\subseteq \Pi$. Notice that $\Omega_{\delta}$ is an ideal of $\Pi$, and we have the equality
$\Omega=\bigcap_{\delta\in\min\{\Pi\setminus \Omega\}}\Omega_{\delta}.$ 
Then, we also have 
$\Omega \kk[X]=\bigcap_{\delta\in\min\{\Pi\setminus \Omega\}}(\Omega_{\delta}\kk[X])$ \cite[Proposition 5.2]{BookDet}. 
If $\delta=[i_1\ldots i_r|j_1\ldots j_r]$, then
\[\Omega_{\delta}\kk[X]=\sum_{t=1}^r\left(I_t\left(X_{[1,j_t-1]}\right)+I_t\left(X^{[1,i_t-1]}\right)\right)+I_{r+1}(X).\]
Therefore, the ideals $\Omega_{\delta}\kk[X]$ belong to $\C_f$ by  \autoref{p:msk}. Thus, their intersection $\Omega \kk[X]$ belongs to $\C_f$ as well. Finally, the conclusion follows as  $\Omega \kk[X]$ is the intersection of ideals in $\C_f$.
%sum of ideals in $\C_f$:
%\[\Omega \kk[X]=\sum_{\delta \in\omega}\Omega_{\delta}\kk[X].\]
\end{proof}

\begin{remark}
There are graded algebras with straightening law over fields of positive characteristic that are not $F$-pure \cite[Remark 5.2]{KoleyVarbaro}, which gives a counterexample  to a conjecture stated in page 245 of \cite{eisenbud80}. However, \autoref{c:pif} confirms the conjecture in many situations.
\end{remark}

\begin{remark}
The fact that for any
antidiagonal term order $\prec$ the ideals $\init_\prec(I_w)$ \cite[Theorem B]{knutson2005grobner}
%for any partial perrmulation $w\in\mathscr{M}_{k\times \ell}(\kk)$   
and  $\init_\prec(\Omega \kk[X])$ \cite[Theorem 2.5]{HerzogTrung} %for any poset ideal $\Omega\subset \Pi$ 
are squarefree was already known. 
\end{remark}

We introduce a new notion generalizing that of a poset ideal. Given $1\leq r\leq k, 1\leq s\leq l$, we set $\Pi_{r,s}$ to be the set of all minors of $X_{r\times s}$.
\begin{definition}
A {\it generalized  ideal} of $\Pi$ is a subset $\Omega\subseteq \Pi$ such that 
$$ \text{for all }[i_1\cdots i_r|j_1\cdots j_r]\in\Omega \text{ and }\pi\in\Pi_{i_r,j_r}, \text{ if } \pi\ls [i_1\cdots i_r|j_1\cdots j_r] \text{ then } \pi\in\Omega.$$
\end{definition}

Note that  any ideal of $\Pi$ is a generalized ideal, but there exist  generalized  ideals which are not  ideals themselves, like the sets $\Pi_{r,s}$. 

Since a generalized poset ideal is the sum of  ideals of  the form $\Pi_{r,s}$ (where $r,s$ vary),  the proof of \autoref{c:pif} can be adapted to show the  %and  \autoref{p:msk} 
%yields the
following result.

\begin{corollary}\label{c:gpif}
Assume \autoref{setup_Knutson_section} and  let $f:=f(X) $ be as in \autoref{def_f(X)}. For any generalized ideal $\Omega\subseteq \Pi$, the generalized poset ideal $\Omega \kk[X]$  is in $\C_{f}$. 
In particular, $\init_\prec(\Omega \kk[X])$ is a squarefree monomial ideal for any antidiagonal term order $\prec$, and $\kk[X]/\Omega \kk[X]$ is $F$-pure  in positive characteristic.
%In particular, $\init(\Omega K[X])$ is a squarefree monomial ideal for all anti-diagonal monomial order, and $K[X]/\Omega K[X]$ is $F$-pure whenever $K$ has positive characteristic.
\end{corollary}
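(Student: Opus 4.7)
The plan is to adapt the argument of \autoref{c:pif} to the generalized setting by writing $\Omega\kk[X]$ as a finite sum of ordinary poset ideals of the subposets $\Pi_{r,s}$, and then invoking \autoref{c:pif} and \autoref{p:msk} together with the closure axioms of $\mathcal{C}_f$. The key structural point is that although a generalized ideal $\Omega\subseteq \Pi$ is not closed under $\leq$ in $\Pi$, it \emph{is} a finite union of sets which are ordinary poset ideals of suitable subposets $\Pi_{i_r,j_r}$.

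Concretely, for every $\delta=[i_1\cdots i_r\mid j_1\cdots j_r]\in\Omega$ I would set
$$D_\delta:=\{\pi\in\Pi_{i_r,j_r}:\pi\leq\delta\}.$$
The preliminary step is two quick checks: (a) $D_\delta$ is an ordinary ideal of $\Pi_{i_r,j_r}$ by transitivity of $\leq$; and (b) $D_\delta\subseteq\Omega$, which is precisely the defining condition of a generalized ideal applied to $\delta$. Since $\delta\in D_\delta$ trivially and $\Pi$ is finite, this gives $\Omega=\bigcup_{\delta\in\Omega}D_\delta$ and hence
$$\Omega\kk[X]=\sum_{\delta\in\Omega}D_\delta\kk[X].$$

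The core step is to show $D_\delta\kk[X]\in\mathcal{C}_f$ for each $\delta\in\Omega$. Setting $Y:=X_{i_r\times j_r}$, the set $D_\delta$ is an ordinary poset ideal of $\Pi_Y=\Pi_{i_r,j_r}$, so the argument of \autoref{c:pif} applied verbatim inside $Y$ expresses $D_\delta\kk[Y]$ as a finite intersection of ideals of the form
$$\sum_{t=1}^{r'}\bigl(I_t(Y_{[1,j_t'-1]})+I_t(Y^{[1,i_t'-1]})\bigr)+I_{r'+1}(Y),$$
indexed by the minors $\delta'=[i_1'\cdots i_{r'}'\mid j_1'\cdots j_{r'}']$ ranging over the minimal elements of $\Pi_Y\setminus D_\delta$. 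Every submatrix of $Y$ that appears on the right is itself a northwest submatrix of $X$, so \autoref{p:msk} places each $I_t$ summand in $\mathcal{C}_{f(X)}$. Closure of $\mathcal{C}_f$ under sums then places each member of the intersection inside $\mathcal{C}_f$; and since $\kk[Y]\hookrightarrow\kk[X]$ is a polynomial, hence flat, extension, finite sums and intersections survive extension to $\kk[X]$, and closure of $\mathcal{C}_f$ under intersections yields $D_\delta\kk[X]\in\mathcal{C}_f$. Summing over $\delta\in\Omega$ gives $\Omega\kk[X]\in\mathcal{C}_f$, from which the squarefree-initial and $F$-purity assertions follow automatically from the general properties of Knutson ideals recalled in \autoref{SubSecKnutson}.

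The main obstacle is purely organizational: one must keep straight the two distinct roles of the indices $(i_r,j_r)$ attached to each $\delta\in\Omega$—as the \emph{location} of $\delta$ inside $X$, and as the \emph{ambient subposet} $\Pi_{i_r,j_r}$ used to test the generalized closure condition. Once this separation is handled cleanly, the decomposition $\Omega=\bigcup_\delta D_\delta$ is immediate, the reduction to \autoref{c:pif}\,/\,\autoref{p:msk} is mechanical, and the Knutson closure axioms of $\mathcal{C}_f$ carry out the rest.
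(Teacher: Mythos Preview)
Your proof is correct and is precisely the adaptation the paper has in mind: it only sketches the argument by observing that a generalized poset ideal decomposes as a sum of ordinary poset ideals of the subposets $\Pi_{r,s}$, and your sets $D_\delta$ make this explicit. The one point worth emphasizing (which you handle correctly) is that you are not invoking \autoref{c:pif} for $Y=X_{i_r\times j_r}$ as a black box---that would only give membership in $\mathcal{C}_{f(Y)}$---but rather transplanting the decomposition formula from its proof and then appealing to \autoref{p:msk} for $f(X)$, using flatness of $\kk[Y]\hookrightarrow\kk[X]$ to carry the intersection across.
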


\section*{Acknowledgments}
The first and fifth authors were partially supported by the PRIN 2020 project \#2020355B8Y ``Squarefree Gröbner degenerations, special varieties and related topics'' and by the MIUR Excellence Department Project, CUP D33C23001110001. The first, fourth and fifth authors were partially supported by INdAM-GNSAGA. The second author was partially funded by NSF Grant DMS \#2001645/2303605. 
The third author   was  supported by CONACyT Grant \#284598, 
and by SECIHTI  Grants CBF 2023-2024-224 \& CF-2023-G-33.
We thank the anonymous referees for helpful suggestions.
\bibliographystyle{alpha}
\bibliography{References}

\end{document}